\documentclass[11pt,reqno]{amsart}
\usepackage{amsfonts,amssymb,amsmath,color}
\usepackage{hyperref}

\setlength{\unitlength}{1cm}

\newtheorem{theorem}{Theorem}

\newtheorem{proposition}[theorem]{Proposition}

\newtheorem{remark}[theorem]{Remark}

\numberwithin{equation}{section} \numberwithin{theorem}{section}

\setcounter{MaxMatrixCols}{30}

\begin{document}

\title{Pattern Recognition on Oriented Matroids: Symmetric Cycles in the Hypercube Graphs. IV}

\author{Andrey O. Matveev}
\email{andrey.o.matveev@gmail.com}

\begin{abstract}
We present statistics on the decompositions (with respect to a distinguished symmetric $2t$-cycle) of vertices of the hypercube graph,
whose negative parts are covered by two subsets of the ground set~$\{1,\ldots,t\}$ of the corresponding oriented matroid.
\end{abstract}

\maketitle

\pagestyle{myheadings}

\markboth{PATTERN RECOGNITION ON ORIENTED MATROIDS}{A.O.~MATVEEV}

\thispagestyle{empty}



\section{Introduction}

Let $t$ be an integer, $t\geq 3$, and let $\mathcal{M}:=(E_t,\mathcal{T})$ be a {\em simple\/} oriented matroid on the {\em ground set\/} $E_t:=[t]:=[1,t]:=\{1,\ldots,t\}$, with set of {\em topes\/}~$\mathcal{T}\subseteq\{1,-1\}^t$ that are regarded as row vectors of the real Euclidean space~$\mathbb{R}^t$. See~\cite{BLSWZ} on oriented matroids. Using a nonstandard terminology, by ``simple'' we mean that $\mathcal{M}$ has no loops, parallel or {\em antiparallel\/} elements.

We denote by $\mathrm{T}^{(+)}:=(1,\ldots,1)$ the $t$-dimensional row vector of all~$1$'s. If the oriented matroid $\mathcal{M}$ is {\em acyclic}, then $\mathrm{T}^{(+)}$ is called its {\em positive tope}. For a subset~\mbox{$A\subseteq E_t$,} we denote by~${}_{-A}\mathrm{T}^{(+)}$ the row vector $T\in\{1,-1\}^t$, whose {\em negative part\/} $T^-:=\{e\in E_t\colon T(e)=-1\}$ is the set~$A$.

Let $\boldsymbol{D}$ be a {\em symmetric cycle\/} in the {\em tope graph\/} of $\mathcal{M}$, that is, a $2t$-cycle with vertex sequence
$\mathrm{V}(\boldsymbol{D}):=(D^0,D^1,\ldots,D^{2t-1})$ such that
\begin{equation}
\label{eq:15}
D^{k+t}=-D^k\; ,\ \ \ 0\leq k\leq t-1\; .
\end{equation}
The subsequence of vertices $(D^0,\ldots,D^{t-1})$ is a {\em basis\/} of the space~$\mathbb{R}^t$. For any tope~$T$ of the oriented matroid~$\mathcal{M}$, there exists a unique row vector~\mbox{$\boldsymbol{x}:=\boldsymbol{x}(T):=\boldsymbol{x}(T,\boldsymbol{D}):=(x_1,\ldots,x_t)\in\{-1,0,1\}^t$} such that
\begin{equation*}
T=\sum_{i\in [t]}x_i\cdot D^{i-1}=\boldsymbol{x}\mathbf{M}\; ,
\end{equation*}
where
\begin{equation*}
\mathbf{M}:=\mathbf{M}(\boldsymbol{D}):=\left(\begin{smallmatrix}
D^0\\ D^1\vspace{-2mm}\\ \vdots \\ D^{t-1}
\end{smallmatrix}\right)\; .
\end{equation*}
Thus, the set
\begin{equation*}
\boldsymbol{Q}(T,\boldsymbol{D}):=\{x_i\cdot D^{i-1}\colon x_i\neq 0\}
\end{equation*}
is the {\em unique inclusion-minimal\/} subset (of {\em odd\/} cardinality) of
the vertex sequence~$\mathrm{V}(\boldsymbol{D})$ such that
\begin{equation}
\label{eq:16}
T=\sum_{Q\in\boldsymbol{Q}(T,\boldsymbol{D})}Q\; ,
\end{equation}
see~\cite[\S{}11.1]{M-PROM},\cite[\S{}1]{M-SC-II}; we have $\sum_{Q\in\boldsymbol{Q}(T,\boldsymbol{D})}\langle T,Q\rangle=t$,
where~$\langle\cdot,\cdot\rangle$ denotes the standard scalar product on~$\mathbb{R}^t$.

While the tope~${}_{-A}\mathrm{T}^{(+)}$ can be viewed as a kind of {\em characteristic vector\/} of a subset~\mbox{$A\subseteq E_t$}, the ``$\boldsymbol{x}$-{\em{}vector}'' ${}_{-A}\mathrm{T}^{(+)}\cdot\mathbf{M}^{-1}$ is yet another {\em linear algebraic\/} portrait of the set~$A$.

We first mention, in Section~\ref{sec:6}, (de)composition constructions in the discrete hypercube~$\{0,1\}^t$.
In~Section~\ref{sec:7} we discuss a few properties of (de)com\-positions~(\ref{eq:16}) of topes of the oriented matroid $\mathcal{M}$ with respect to arbitrary symmetric cycles~$\boldsymbol{D}$ defined by~(\ref{eq:15}). In Section~\ref{sec:8} we deal with the {\em hypercube graph\/}~$\boldsymbol{H}(t,2)$ of topes of the oriented matroid~$\mathcal{H}:=(E_t,\{1,-1\}^t)$ realizable as the {\em arrangement\/} of {\em coordinate hyperplanes\/} in~$\mathbb{R}^t$, see, e.g.,~\cite[Example~4.1.4]{BLSWZ}. We present statistics on the decompositions of vertices
$T:=(T(1),\ldots,T(t))$ of the {\em discrete hypercube\/} $\{1,-1\}^t$ with respect to a distinguished symmetric cycle~$\boldsymbol{R}:=(R^0,R^1,\ldots,R^{2t-1},R^0)$ of the graph~$\boldsymbol{H}(t,2)$, defined as follows:
\begin{equation}
\label{eq:17}
\begin{split}
R^0:\!&=\mathrm{T}^{(+)}\; ,\\
R^s:\!&={}_{-[s]}R^0\; ,\ \ \ 1\leq s\leq t-1\; ,
\end{split}
\end{equation}
and
\begin{equation}
\label{eq:18}
R^{k+t}:=-R^k\; ,\ \ \ 0\leq k\leq t-1\; .
\end{equation}

The smart {\em interval\/} organization of the {\em maximal positive basis\/}~$\mathrm{V}(\boldsymbol{R})$ of the space~$\mathbb{R}^t$
allows us to get the {\em linear algebraic\/} decompositions of vertices of~$\boldsymbol{H}(t,2)$ with respect to the cycle $\boldsymbol{R}$ in an {\em explicit} and {\em computation-free\/} way, see~\cite[Prop.~2.4]{M-SC-II}. Four assertions collected in~\cite[Prop.~2.4]{M-SC-II} also concern the {\em interval\/} structure of subsets of the ground set~$E_t$: any nonempty subset~$A\subseteq E_t$ is regarded as a disjoint union
\begin{equation*}
A=[i_1,j_1]\;\dot{\cup}\;[i_2,j_2]\;\dot{\cup}\;\cdots\;\dot{\cup}\;[i_{\varrho(A)},j_{\varrho(A)}]
\end{equation*}
of intervals of $E_t$, for some $\varrho(A)$, where
\begin{equation*}
j_1+2\leq i_2,\ \ \ j_2+2\leq i_3,\ \ \ldots,\ \ j_{\varrho(A)-1}+2\leq i_{\varrho(A)}\; .
\end{equation*}

We are interested in the decompositions~$\boldsymbol{Q}({}_{-A}\mathrm{T}^{(+)},\boldsymbol{R})$, $\boldsymbol{Q}({}_{-B}\mathrm{T}^{(+)},\boldsymbol{R})$ and $\boldsymbol{Q}(T,\boldsymbol{R})$ of topes~${}_{-A}\mathrm{T}^{(+)}$, ${}_{-B}\mathrm{T}^{(+)}$ and $T$ such that $T^-=A\cup B$.

We consider ordered two-member {\em Sperner families\/}~$(A,B)$ on the ground set $E_t$, but, of course, the case of {\em strict inclusion\/} of subsets~$F\subsetneqq G$ is implicitly taken into our consideration as well, if we set~$A:=F$ and~\mbox{$B:=G-F$}.

{\em Integer compositions\/} and {\em Smirnov words\/} (i.e., words in which adjacent letters always differ) over sufficiently large alphabets form a toolkit for a careful structural and enumerative analysis of single finite sequences of symbols and tuples of sequences. One could involve various {\em restricted integer compositions\/} in order to make such an analysis even more detailed. See, e.g.,~\cite{HM} on compositions and words, and~\cite{Prodinger} and~\cite[Appx]{M-SC-III} on Smirnov words.

A closely related family of research tools is that originating from ``the first\! {\em K\!aplansky's lemma}''~\cite[\S{}20]{K} that gives the number
\begin{equation}
\label{eq:19}
\tbinom{n-k+1}{k}
\end{equation}
of ways of selecting $k$ objects, {\em no two consecutive}, from $n$ objects arranged in a row, see~e.g.,~\cite[\S{}1.8]{C},\cite[\S{}2.3.15]{GJ},\cite{Ka},\cite[\S{}1.3]{Riordan},\cite[\S{}3.2]{Ryser} and~\cite{MS}; in other words,~(\ref{eq:19}) is the number of words of length~$n$, over the two-letter alphabet~$(\theta,\alpha)$, with~$k$ nonconsecutive letters~$\alpha$. Recall that the total number~$\sum_{k=0}^{\lfloor(n+1)/2\rfloor}\tbinom{n-k+1}{k}$ of such words of length~$n$ is the~{\em Fibonacci number\/}~$\mathsf{F}_{n+2}$, see~e.g.,~\cite[Eq.~(6.130)]{GKP},\cite[Ex.~1.35a]{Stanley}.

Together with the four assertions of~\cite[Prop.~2.4]{M-SC-II}, three players in our statistics on the decompositions of vertices of the discrete hypercube~$\{1,-1\}^t$, with respect to the distinguished symmetric cycle~$\boldsymbol{R}$ of the hypercube graph $\boldsymbol{H}(t,2)$, are as follows:

We denote by $\mathtt{c}(m;n)$ the number of {\em compositions\/} of a positive integer~$n$ with $m$ positive parts:
\begin{equation*}
\mathtt{c}(m;n):=\tbinom{n-1}{m-1}\; .
\end{equation*}

Let $(\theta,\alpha,\beta)$ be a {\em three-letter\/} alphabet. If $\mathfrak{s}',\mathfrak{s}''\in\{\theta,\alpha,\beta\}$, then we denote by
\begin{equation*}
\mathfrak{T}(\mathfrak{s}',\mathfrak{s}'';k,i,j)
\end{equation*}
the number of ternary Smirnov words, over the alphabet~$(\theta,\alpha,\beta)$ and with the {\em Parikh vector\/}~$(k,i,j)$, that start with the letter~$\mathfrak{s}'$ and end with the letter~$\mathfrak{s}''$; see~\cite[Rem.~4.2]{M-SC-III}. Similarly, we denote by
\begin{equation*}
\mathfrak{F}(\mathfrak{s}',\mathfrak{s}'';k,i,j,h)
\end{equation*}
the number of Smirnov words, over the {\em four-letter\/} alphabet $(\theta,\alpha,\beta,\gamma)$ and with the Parikh vector~$(k,i,j,h)$,
that start with a letter~$\mathfrak{s}'\in\{\theta,\alpha,\beta,\gamma\}$ and end with a letter~$\mathfrak{s}''\in\{\theta,\alpha,\beta,\gamma\}$; see~\cite[Rem.~4.3]{M-SC-III}.

\section{Coherent vertex (de)compositions in the discrete hypercubes $\{0,1\}^t$ and $\{1,-1\}^t$}
\label{sec:6}

Let $\widetilde{\boldsymbol{H}}(t,2)$ denote the {\em hypercube graph\/} on the vertex set~$\{0,1\}^t$; for vertices $\widetilde{T}'$ and $\widetilde{T}''$, the pair~$\{\widetilde{T}',\widetilde{T}''\}$ by definition is an {\em edge\/} of $\widetilde{\boldsymbol{H}}(t,2)$ if and only if the {\em Hamming distance\/} between the words $\widetilde{T}'$ and $\widetilde{T}''$ is $1$.

Following~\cite[\S{}1]{EI-I}, we recall that a family of interesting graphs related to the hypercube graph~$\widetilde{\boldsymbol{H}}(t,2)$ includes {\em Fibonacci cubes\/}~\cite{Hsu,HPL,K-Fib}, {\em Lucas cubes\/}~\cite{M-PC-ZC}, {\em generalized Fibonacci cubes\/}~\cite{IKR}, {\em $k$-Fibonacci cubes\/}~\cite{ESS}, {\em Fibonacci-run graphs}~\cite{EI-I, EI-II} and {\em daisy cubes\/}~\cite{KM}.

We define a {\em symmetric cycle\/} $\widetilde{\boldsymbol{D}}$ in the hypercube graph~$\widetilde{\boldsymbol{H}}(t,2)$ to be its~\mbox{$2t$-cycle}, with vertex sequence
$\mathrm{V}(\widetilde{\boldsymbol{D}}):=(\widetilde{D}^0,\widetilde{D}^1,\ldots,\widetilde{D}^{2t-1})$, such that
\begin{equation*}
\widetilde{D}^{k+t}:=\mathrm{T}^{(+)}-\widetilde{D}^k\; ,\ \ \ 0\leq k\leq t-1\; ,
\end{equation*}
cf.~(\ref{eq:15}).

{\em Coherent decompositions\/} of vertices in the hypercube graphs~$\widetilde{\boldsymbol{H}}(t,2)$ and $\boldsymbol{H}(t,2)$ are implemented by means of the standard conversions
\begin{align}
\nonumber
\{0,1\} &\to \{1,-1\}: & x &\mapsto 1-2x\; , & 0&\mapsto 1\; , & 1&\mapsto -1\; ,\\
\intertext{and}
\nonumber
\{1,-1\} &\to \{0,1\}: & z &\mapsto \tfrac{1}{2}(1-z)\; ,& 1&\mapsto 0\; , & -1&\mapsto 1\; ,
\end{align}
whose derived bijective maps are
\begin{align}
\nonumber
\{0,1\}^t &\to \{1,-1\}^t: & \widetilde{T} &\mapsto \mathrm{T}^{(+)}-2\widetilde{T}\; ,\\
\intertext{and}
\label{eq:4}
\{1,-1\}^t &\to \{0,1\}^t: & T &\mapsto \tfrac{1}{2}(\mathrm{T}^{(+)}-T)\; .
\end{align}

Given a symmetric cycle $\boldsymbol{D}$ in the hypercube graph $\boldsymbol{H}(t,2)$ on the vertex set~$\{1,-1\}^t$, let us regard bijection~(\ref{eq:4}) as the (de)composition
\begin{equation*}
\{1,-1\}^t \xrightarrow{\text{(\ref{eq:4})}} \{0,1\}^t:
\end{equation*}
\begin{equation*}
\begin{split}
T&=\sum_{Q\in\boldsymbol{Q}(T,\boldsymbol{D})}Q\\
\mapsto &\phantom{=}\;\,\tfrac{1}{2}\Bigl(\mathrm{T}^{(+)}-\sum_{Q\in\boldsymbol{Q}(T,\boldsymbol{D})}Q\Bigr)\\
&=\tfrac{1}{2}\Bigl(|\boldsymbol{Q}(T,\boldsymbol{D})|\mathrm{T}^{(+)}-\bigl(|\boldsymbol{Q}(T,\boldsymbol{D})|-1\bigr)\mathrm{T}^{(+)}
-\sum_{Q\in\boldsymbol{Q}(T,\boldsymbol{D})}Q\Bigr)\\
&=-\tfrac{1}{2}\bigl(|\boldsymbol{Q}(T,\boldsymbol{D})|-1\bigr)\mathrm{T}^{(+)}+
\sum_{Q\in\boldsymbol{Q}(T,\boldsymbol{D})}\tfrac{1}{2}\bigl(\mathrm{T}^{(+)}-Q\bigr)
\; .
\end{split}
\end{equation*}

\begin{remark}
For a symmetric cycle $\widetilde{\boldsymbol{D}}$ in the hypercube graph $\widetilde{\boldsymbol{H}}(t,2)$ on the vertex set $\{0,1\}^t$, and for any vertex $\widetilde{T}$ of $\widetilde{\boldsymbol{H}}(t,2)$, there exists
a {\em unique inclusion-minimal\/} subset $\widetilde{\boldsymbol{Q}}(\widetilde{T},\widetilde{\boldsymbol{D}})\subset\mathrm{V}(\widetilde{\boldsymbol{D}})$, of {\em odd} cardinality, such that
\begin{equation*}
\widetilde{T}=-\tfrac{1}{2}\bigl(|\widetilde{\boldsymbol{Q}}(\widetilde{T},\widetilde{\boldsymbol{D}})|
-1\bigr)\mathrm{T}^{(+)}
+\sum_{\substack{\widetilde{Q}\in\widetilde{\boldsymbol{Q}}(\widetilde{T},\widetilde{\boldsymbol{D}}):\\
\widetilde{Q}\neq(0,\ldots,0)}
}\widetilde{Q}\; .
\end{equation*}
\end{remark}

In practice, it is convenient to translate (de)composition problems, for the hypercube graph~$\widetilde{\boldsymbol{H}}(t,2)$ and its symmetric cycles, to the hypercube graph~$\boldsymbol{H}(t,2)$ on the vertex set~$\{1,-1\}^t$, and then to send their solutions (found more or less easily) back to the graph~$\widetilde{\boldsymbol{H}}(t,2)$.

Let $X,Y\in\{1,-1\}^t$ be two vertices of the hypercube graph~$\boldsymbol{H}(t,2)$, where~$t$ is {\em even}, and let $\widetilde{X},\widetilde{Y}\in\{0,1\}^t$ be the corresponding vertices of the hypercube graph~$\widetilde{\boldsymbol{H}}(t,2)$, namely,
$\widetilde{X}:=\tfrac{1}{2}(\mathrm{T}^{(+)}-X)$ and~\mbox{$\widetilde{Y}:=\tfrac{1}{2}(\mathrm{T}^{(+)}-Y)$}.
Let $\mathtt{hwt}(\widetilde{T}):=\langle\widetilde{T},\mathrm{T}^{(+)}\rangle$ denote the {\em Hamming weight\/} of a
vertex~\mbox{$\widetilde{T}\in\{0,1\}^t$}, that is, the number of $1$'s in $\widetilde{T}$.
We have
\begin{equation*}
\langle X,Y\rangle=0\ \ \Longleftrightarrow\ \ \langle\widetilde{X},\widetilde{Y}\rangle=\frac{2(\mathtt{hwt}(\widetilde{X})+\mathtt{hwt}(\widetilde{Y}))-t}{4}\; ,
\end{equation*}
and if $4|t$ (i.e., $t$ is divisible by $4$), then we have
\begin{multline*}
|X^-|=|Y^-|=:s\; ,\ \ \langle X,Y\rangle=0\\ \Longleftrightarrow\ \
\mathtt{hwt}(\widetilde{X})=\mathtt{hwt}(\widetilde{Y})=:s\; ,\ \
\langle\widetilde{X},\widetilde{Y}\rangle=s-\tfrac{t}{4}\; .
\end{multline*}

\section{A few properties of the decompositions of topes of oriented matroids}
\label{sec:7}

Let $\boldsymbol{D}$ be a symmetric cycle in the tope graph of a simple oriented matroid~$\mathcal{M}:=(E_t,\mathcal{T})$. For a tope $T\in\mathcal{T}$, we let $\mathfrak{q}(T):=\mathfrak{q}(T,\boldsymbol{D})$ denote the cardinality of the set $\boldsymbol{Q}(T,\boldsymbol{D})$:
\begin{equation*}
\mathfrak{q}(T):=|\boldsymbol{Q}(T,\boldsymbol{D})|\; ,
\end{equation*}
and if $T:={}_{-A}\mathrm{T}^{(+)}$ for some subset $A\subseteq E_t$, then we also write
$\mathfrak{q}(A):=\mathfrak{q}(A,\boldsymbol{D})$ instead of $\mathfrak{q}({}_{-A}\mathrm{T}^{(+)})$:
\begin{equation*}
\mathfrak{q}(A):=|\boldsymbol{Q}({}_{-A}\mathrm{T}^{(+)},\boldsymbol{D})|\; .
\end{equation*}

Recall that for the {\em graph distance\/} $d(T',T'')$ between topes $T',T''\in\mathcal{T}$ (i.e., the {\em Hamming distance\/} between the words $T'$ and $T''$), we have
\begin{equation*}
d(T',T'')=\tfrac{1}{2}\bigr(t-\langle T', T''\rangle\bigr)\; ,\ \ \ \langle T', T''\rangle=t-2d(T',T'')\; .
\end{equation*}
In particular, if $t$ is {\em even}, then we have
\begin{equation*}
\langle T', T''\rangle=0\ \ \Longleftrightarrow\ \ d(T',T'')=\tfrac{t}{2}\; .
\end{equation*}
Interesting subsets of vertices of the discrete hypercube~$\{1,-1\}^t$, with {\em zero\/} pairwise {\em scalar products}, are the rows of {\em Hadamard \mbox{matrices}}, see, e.g.,~\cite{Horadam,Seberry,S-Y}.

\subsection{Change of cycle, change of basis} $\quad$

If we are interested in the decompositions of a tope $T$ of a simple oriented matroid~$\mathcal{M}:=(E_t,\mathcal{T})$, with respect to
symmetric cycles~$\boldsymbol{D}'$ and~$\boldsymbol{D}''$ in the tope graph of $\mathcal{M}$, then changing cycles is essentially the same as changing from one (maximal positive) basis of the space~$\mathbb{R}^t$ to another. Therefore, we have
\begin{equation*}
\boldsymbol{x}(T,\boldsymbol{D}'')=\boldsymbol{x}(T,\boldsymbol{D}')\cdot\mathbf{M}(\boldsymbol{D}')\mathbf{M}(\boldsymbol{D}'')^{-1}\; .
\end{equation*}

\subsection{Decompositions, Inclusion--Exclusion and valuations} $\quad$

Let $\boldsymbol{R}$ be the distinguished symmetric cycle~(\ref{eq:17})(\ref{eq:18}) in the hypercube graph~$\boldsymbol{H}(t,2)$. Let~$\boldsymbol{\sigma}(1):=(1,0,\ldots,0)$ denote the first standard unit vector of the space~$\mathbb{R}^t$.

As noted in~\cite[Rem.~2.1]{M-SC-III}, for {\em disjoint\/} subsets~$A$ and~$B$ of~$E_t$, we have
\begin{equation*}
\boldsymbol{x}({}_{-A}\mathrm{T}^{(+)},\boldsymbol{R})+\boldsymbol{x}({}_{-B}\mathrm{T}^{(+)},\boldsymbol{R})
=\boldsymbol{\sigma}(1)+\boldsymbol{x}({}_{-(A\dot{\cup}B)}\mathrm{T}^{(+)},\boldsymbol{R})\; .
\end{equation*}
In fact, this is an {\em Inclusion--Exclusion\/} type relation, since
$\boldsymbol{\sigma}(1)=\boldsymbol{x}(\mathrm{T}^{(+)},\boldsymbol{R})$ $=\boldsymbol{x}({}_{-(A\cap B)}\mathrm{T}^{(+)},\boldsymbol{R})$.

Turning to arbitrary topes and symmetric cycles, we arrive at a general conclusion (see~\cite[p.~265]{Stanley} on the poset-theoretic context):
\begin{proposition}
\label{th:5}
Let~$\boldsymbol{D}$ be an arbitrary symmetric cycle~{\rm(\ref{eq:15})} in the hypercube graph~$\boldsymbol{H}(t,2)$.
\begin{itemize}
\item[\rm(i)] The map
\begin{equation*}
\mathbb{B}(t)\to\mathbb{Z}^t\; ,\ \ A\mapsto\boldsymbol{x}({}_{-A}\mathrm{T}^{(+)},\boldsymbol{D})\; ,
\end{equation*}
is a {\em valuation} on the {\em Boolean lattice}~$\mathbb{B}(t)$ of subsets of the set $E_t$, since for any two sets~\mbox{$A,B\in\mathbb{B}(t)$,} we have
\begin{equation*}
\boldsymbol{x}({}_{-A}\mathrm{T}^{(+)},\boldsymbol{D})
+\boldsymbol{x}({}_{-B}\mathrm{T}^{(+)},\boldsymbol{D})=
\boldsymbol{x}({}_{-(A\cap B)}\mathrm{T}^{(+)},\boldsymbol{D})+\boldsymbol{x}({}_{-(A\cup B)}\mathrm{T}^{(+)},\boldsymbol{D})\; .
\end{equation*}
As a consequence, we have
\begin{equation*}
\boldsymbol{x}({}_{-(A\cap B)}\mathrm{T}^{(+)},\boldsymbol{D})
+\boldsymbol{x}({}_{-(A\triangle B)}\mathrm{T}^{(+)},\boldsymbol{D})=
\boldsymbol{x}(\mathrm{T}^{(+)},\boldsymbol{D})+\boldsymbol{x}({}_{-(A\cup B)}\mathrm{T}^{(+)},\boldsymbol{D})\; .
\end{equation*}

\item[\rm(ii)] Let $\mathcal{A}:=\{A_1,\ldots,A_{\alpha}\}$ be a nonempty family of subsets of the set~$E_t$. Let
$\mu(\cdot,\cdot):=\mu_{P(\mathcal{A})}(\cdot,\cdot)$ denote the {\em M\"{o}bius function} of the poset $P(\mathcal{A})$ of all intersections $\bigcap_{s\in S}A_s$, $S\subseteq[\alpha]$, ordered by inclusion; the greatest element $\hat{1}:=\bigcup_{A\in\mathcal{A}}A$ of~$P(\mathcal{A})$ represents the empty intersection. We have
\begin{align*}
\boldsymbol{x}({}_{-\bigcup_{A\in\mathcal{A}}A}\mathrm{T}^{(+)},\boldsymbol{D})
&=-\sum_{S\subseteq[\alpha]\colon |S|>0}(-1)^{|S|}\cdot
\boldsymbol{x}({}_{-\bigcap_{s\in S}A_s}\mathrm{T}^{(+)},\boldsymbol{D})\\
&=-\sum_{B\in P(\mathcal{A})-\{\hat{1}\}}\mu(B,\hat{1})\cdot
\boldsymbol{x}({}_{-B}\mathrm{T}^{(+)},\boldsymbol{D})\; .
\end{align*}
\end{itemize}
\end{proposition}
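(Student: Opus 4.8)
The plan is to reduce everything to the identity $\boldsymbol{x}({}_{-A}\mathrm{T}^{(+)},\boldsymbol{D})={}_{-A}\mathrm{T}^{(+)}\cdot\mathbf{M}(\boldsymbol{D})^{-1}$, valid because $(D^0,\ldots,D^{t-1})$ is a basis of $\mathbb{R}^t$ and $\mathbf{M}(\boldsymbol{D})$ is therefore invertible, and then to exploit that $A\mapsto{}_{-A}\mathrm{T}^{(+)}$ depends \emph{affinely} on the characteristic vector of $A$. Concretely, let $\boldsymbol{\epsilon}_e\in\mathbb{R}^t$ denote the $e$-th standard unit row vector and set $\chi_A:=\sum_{e\in A}\boldsymbol{\epsilon}_e$; coordinatewise ${}_{-A}\mathrm{T}^{(+)}=\mathrm{T}^{(+)}-2\chi_A$, whence
\[
\boldsymbol{x}({}_{-A}\mathrm{T}^{(+)},\boldsymbol{D})\;=\;\mathrm{T}^{(+)}\mathbf{M}(\boldsymbol{D})^{-1}-2\sum_{e\in A}\boldsymbol{\epsilon}_e\mathbf{M}(\boldsymbol{D})^{-1}\;,
\]
a vector which lies in $\mathbb{Z}^t$ because the left-hand side lies in $\{-1,0,1\}^t$.

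For part~(i), I would first observe that $A\mapsto\chi_A$ is a valuation on $\mathbb{B}(t)$: in each coordinate $e$ its value equals $1$ if $e\in A$ and $0$ otherwise, and $\min(a,b)+\max(a,b)=a+b$ for $a,b\in\{0,1\}$. Hence $A\mapsto\mathrm{T}^{(+)}-2\chi_A$ is a valuation --- a constant plus $-2$ times a valuation --- and right multiplication by $\mathbf{M}(\boldsymbol{D})^{-1}$ is an additive homomorphism of $\mathbb{R}^t$, so it preserves the modular law; this gives the first displayed identity of part~(i). For the symmetric-difference consequence, apply that identity to the pair $(A\cap B,\,A\triangle B)$: these two sets are disjoint, with $(A\cap B)\cup(A\triangle B)=A\cup B$ and $(A\cap B)\cap(A\triangle B)=\emptyset$, so the modular law reads
\[
\boldsymbol{x}({}_{-(A\cap B)}\mathrm{T}^{(+)},\boldsymbol{D})+\boldsymbol{x}({}_{-(A\triangle B)}\mathrm{T}^{(+)},\boldsymbol{D})=\boldsymbol{x}({}_{-\emptyset}\mathrm{T}^{(+)},\boldsymbol{D})+\boldsymbol{x}({}_{-(A\cup B)}\mathrm{T}^{(+)},\boldsymbol{D})\;,
\]
and $\boldsymbol{x}({}_{-\emptyset}\mathrm{T}^{(+)},\boldsymbol{D})=\boldsymbol{x}(\mathrm{T}^{(+)},\boldsymbol{D})$.

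For part~(ii), I would derive the first equality by substituting $\bigcap_{s\in S}A_s$ for $A$ in the displayed formula of the first paragraph and invoking two elementary identities: $-\sum_{S\subseteq[\alpha]\colon|S|>0}(-1)^{|S|}=1$, and, for each fixed $e\in E_t$, the sum $\sum_{S\subseteq[\alpha]\colon|S|>0,\ e\in\bigcap_{s\in S}A_s}(-1)^{|S|}$ equals $-1$ when $e\in\bigcup_{i\in[\alpha]}A_i$ and equals $0$ otherwise; combining these recovers $\mathrm{T}^{(+)}\mathbf{M}(\boldsymbol{D})^{-1}-2\sum_{e\in\bigcup_{i}A_i}\boldsymbol{\epsilon}_e\mathbf{M}(\boldsymbol{D})^{-1}=\boldsymbol{x}({}_{-\bigcup_{A\in\mathcal{A}}A}\mathrm{T}^{(+)},\boldsymbol{D})$. (Alternatively, one can induct on $\alpha$ using only the modular law established in part~(i).) The second equality is the standard regrouping of the inclusion--exclusion sum according to the value $B\in P(\mathcal{A})-\{\hat1\}$ of the intersection $\bigcap_{s\in S}A_s$: the resulting coefficient of $\boldsymbol{x}({}_{-B}\mathrm{T}^{(+)},\boldsymbol{D})$, namely $\sum_{S\colon\bigcap_{s\in S}A_s=B}(-1)^{|S|}$, equals $\mu_{P(\mathcal{A})}(B,\hat1)$ by the cross-cut/M\"{o}bius identity recorded in~\cite[p.~265]{Stanley}. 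The proposition presents no genuine obstacle; the only point needing care is the sign and index bookkeeping in this last step --- reconciling the empty-intersection convention $\hat1=\bigcup_{A\in\mathcal{A}}A$ and the restriction $|S|>0$ with the M\"{o}bius reformulation --- for which I would lean on~\cite{Stanley} rather than re-derive it.
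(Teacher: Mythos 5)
Your proof is correct. The paper in fact states this proposition without any proof, presenting it as the immediate generalization of the disjoint-union identity $\boldsymbol{x}({}_{-A}\mathrm{T}^{(+)},\boldsymbol{R})+\boldsymbol{x}({}_{-B}\mathrm{T}^{(+)},\boldsymbol{R})=\boldsymbol{\sigma}(1)+\boldsymbol{x}({}_{-(A\dot{\cup}B)}\mathrm{T}^{(+)},\boldsymbol{R})$ recalled just before it and deferring the M\"{o}bius-function step to Stanley; your argument --- the affine dependence of ${}_{-A}\mathrm{T}^{(+)}$ on the characteristic vector of $A$, transported through the linear isomorphism given by $\mathbf{M}(\boldsymbol{D})^{-1}$, followed by the standard inclusion--exclusion and crosscut identities --- is precisely the reasoning the paper leaves implicit, so it takes essentially the same approach while supplying the omitted details.
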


\subsection{Circular translations of decompositions}
\begin{remark}
Let $T\in\{1,-1\}^t$ be a vertex of the hypercube graph~$\boldsymbol{H}(t,2)$, and let $\boldsymbol{D}$ be a symmetric cycle in~$\boldsymbol{H}(t,2)$. Suppose that
\begin{equation*}
(D^0,D^1,\ldots,D^{2t-1})=:\mathrm{V}(\boldsymbol{D})\supset\boldsymbol{Q}(T,\boldsymbol{D})=(D^{i_0},D^{i_1},\ldots,D^{i_{\mathfrak{q}(T)-1}})\; ,
\end{equation*}
for some indices $i_0<i_1<\cdots<\mathfrak{q}(T)-1$. For any $s\in\mathbb{Z}$, we have
\begin{equation*}
\sum_{0\leq j\leq \mathfrak{q}(T)-1}D^{(i_j+s)\!\!\!\!\mod{2t}}\in\{1,-1\}^t\; .
\end{equation*}
\end{remark}

\subsection{The negative parts of topes, graph distances, and the scalar products of topes} $\quad$

Let $\boldsymbol{D}$ be a symmetric cycle in the tope graph of a simple oriented matroid~$\mathcal{M}:=(E_t,\mathcal{T})$. According to~\cite[Rem.~13.2]{M-PROM}, for any tope~$T\in\mathcal{T}$ we have
\begin{align*}
\sum_{Q\in\boldsymbol{Q}(T,\boldsymbol{D})}d(T,Q)&=\tfrac{1}{2}\bigl(\mathfrak{q}(T)-1\bigr)t\\
\intertext{and}
\mathfrak{q}(T)&=1+\tfrac{2}{t}\sum_{Q\in\boldsymbol{Q}(T,\boldsymbol{D})}d(T,Q)\; .
\end{align*}
Since
\begin{equation*}
e\in E_t\; ,\ \ \ T(e)=-1\ \ \ \Longrightarrow\ \ \ |\{Q\in\boldsymbol{Q}(T,\boldsymbol{D})\colon Q(e)=-1\}|=\lceil\tfrac{\mathfrak{q}(T)}{2}\rceil\; ,
\end{equation*}
we see that
\begin{equation*}
\begin{split}
\sum_{Q\in\boldsymbol{Q}(T,\boldsymbol{D})}|Q^-|&=\sum_{e\in T^-}\lceil\tfrac{\mathfrak{q}(T)}{2}\rceil
+\sum_{e\in T^+}\lfloor\tfrac{\mathfrak{q}(T)}{2}\rfloor
\\&=|T^-|\cdot\lceil\tfrac{\mathfrak{q}(T)}{2}\rceil
+(t-|T^-|)\cdot\lfloor\tfrac{\mathfrak{q}(T)}{2}\rfloor\; ,
\end{split}
\end{equation*}
where~$T^+:=\{e\in E_t\colon T(e)=1\}$ is the {\em positive part\/} of the tope~$T$.

\begin{remark}
\label{th:1}
For any tope $T$ of a simple oriented matroid $\mathcal{M}:=(E_t,\mathcal{T})$, and for any symmetric cycle~$\boldsymbol{D}$ in the tope graph of~$\mathcal{M}$, we have
\begin{itemize}
\item[\rm(i)]
\begin{align}
\nonumber
\sum_{Q\in\boldsymbol{Q}(T,\boldsymbol{D})}|Q^-|&=|T^-|+\tfrac{1}{2}\bigl(\mathfrak{q}(T)-1\bigr)t\\
\intertext{and}
\label{eq:3}
\mathfrak{q}(T)&=1-\tfrac{2}{t}|T^-|+\tfrac{2}{t}\sum_{Q\in\boldsymbol{Q}(T,\boldsymbol{D})}|Q^-|\; .
\end{align}
\item[\rm(ii)]
\begin{equation}
\label{eq:2}
\sum_{Q\in\boldsymbol{Q}(T,\boldsymbol{D})}d(T,Q)=-|T^-|+\sum_{Q\in\boldsymbol{Q}(T,\boldsymbol{D})}|Q^-|\; .
\end{equation}
\end{itemize}
\end{remark}

As noted in~\cite[Rem.~13.3]{M-PROM}, if $T\not\in\mathrm{V}(\boldsymbol{D})$,
then we have
\begin{align}
\nonumber
\sum_{0\leq i< j\leq \mathfrak{q}(T)-1}d(Q^i,Q^j)&=\tfrac{1}{4}\bigl(\mathfrak{q}(T)^2-1\bigr)t\; ,\\
\nonumber
\mathfrak{q}(T)&=\sqrt{1+\tfrac{4}{t}\sum\nolimits_{i<j}d(Q^i,Q^j)}\; ,\\
\intertext{and}
\label{eq:1}
\sum_{0\leq i< j\leq \mathfrak{q}(T)-1}d(Q^i,Q^j)&=\tfrac{1}{2}\bigl(\mathfrak{q}(T)+1\bigr)\sum_{Q\in\boldsymbol{Q}(T,\boldsymbol{D})}d(T,Q)\; .
\end{align}

Relations (\ref{eq:2}) and (\ref{eq:1}) yield
\begin{multline*}
\sum_{0\leq i< j\leq \mathfrak{q}(T)-1}d(Q^i,Q^j)=\tfrac{1}{2}\bigl(\mathfrak{q}(T)+1\bigr)\sum_{Q\in\boldsymbol{Q}(T,\boldsymbol{D})}d(T,Q)
\\=
\tfrac{1}{2}\bigl(\mathfrak{q}(T)+1\bigr)
\Bigl(-|T^-|+\sum_{Q\in\boldsymbol{Q}(T,\boldsymbol{D})}|Q^-|\Bigr)
\; .
\end{multline*}
Using~(\ref{eq:3}), we get
\begin{multline*}
\sum_{0\leq i< j\leq \mathfrak{q}(T)-1}d(Q^i,Q^j)=
\tfrac{1}{2}\bigl(\mathfrak{q}(T)+1\bigr)
\Bigl(-|T^-|+\sum_{Q\in\boldsymbol{Q}(T,\boldsymbol{D})}|Q^-|\Bigr)
\\=
\tfrac{1}{2}\Bigl(1-\tfrac{2}{t}|T^-|+\tfrac{2}{t}\sum_{Q\in\boldsymbol{Q}(T,\boldsymbol{D})}|Q^-|+1\Bigr)
\Bigl(-|T^-|+\sum_{Q\in\boldsymbol{Q}(T,\boldsymbol{D})}|Q^-|\Bigr)
\; .
\end{multline*}

\begin{remark}
\label{th:2}
Let $\boldsymbol{D}$ be a symmetric cycle in the tope graph of a simple oriented matroid~$\mathcal{M}:=(E_t,\mathcal{T})$.
Let $T\in\mathcal{T}$ be a tope such that $T\not\in\mathrm{V}(\boldsymbol{D})$.

\begin{multline*}
\sum_{0\leq i< j\leq \mathfrak{q}(T)-1}d(Q^i,Q^j)=
\Bigl(1-\tfrac{1}{t}|T^-|+\tfrac{1}{t}\sum_{Q\in\boldsymbol{Q}(T,\boldsymbol{D})}|Q^-|\Bigr)
\\ \times
\Bigl(-|T^-|+\sum_{Q\in\boldsymbol{Q}(T,\boldsymbol{D})}|Q^-|\Bigr)
\; .
\end{multline*}
\end{remark}

For convenience, we now reformulate~\cite[Rem.~13.3]{M-PROM} and Remark~\ref{th:2} via the scalar products of topes:

\begin{remark}Let $\boldsymbol{D}$ be a symmetric cycle in the tope graph of a simple oriented matroid~$\mathcal{M}:=(E_t,\mathcal{T})$.
Let $T\in\mathcal{T}$ be a tope such that $T\not\in\mathrm{V}(\boldsymbol{D})$.
\begin{itemize}
\item[\rm(i)]
\begin{align*}
\sum_{0\leq i< j\leq \mathfrak{q}(T)-1}
\langle Q^i,Q^j\rangle&=\tfrac{1}{2}\bigl(1-\mathfrak{q}(T)\bigr)t\; ,\\
\mathfrak{q}(T)&=1-\tfrac{2}{t}\sum_{0\leq i< j\leq \mathfrak{q}(T)-1}\langle Q^i,Q^j\rangle\; .
\end{align*}

\item[\rm(ii)]
\begin{multline*}
\sum_{0\leq i< j\leq \mathfrak{q}(T)-1}\langle Q^i,Q^j\rangle=
\tbinom{\mathfrak{q}(T)}{2}t -2\Bigl(1-\tfrac{1}{t}|T^-|+\tfrac{1}{t}\sum_{Q\in\boldsymbol{Q}(T,\boldsymbol{D})}|Q^-|\Bigr)
\\ \times
\Bigl(-|T^-|+\sum_{Q\in\boldsymbol{Q}(T,\boldsymbol{D})}|Q^-|\Bigr)
\; .
\end{multline*}
\end{itemize}
\end{remark}

Now, if $T'$ and $T''$ are two topes of~$\mathcal{M}:=(E_t,\mathcal{T})$, then we have
\begin{multline*}
t-2d(T',T'')=\langle T',T''\rangle=\Bigl\langle \sum_{Q'\in\boldsymbol{Q}(T',\boldsymbol{D})}, \sum_{Q''\in\boldsymbol{Q}(T'',\boldsymbol{D})}\Bigr\rangle
=\sum_{\substack{Q'\in\boldsymbol{Q}(T',\boldsymbol{D})\; ,\\
Q''\in\boldsymbol{Q}(T'',\boldsymbol{D})}}\langle Q',Q''\rangle
\\=\sum_{\substack{Q'\in\boldsymbol{Q}(T',\boldsymbol{D})\; ,\\
Q''\in\boldsymbol{Q}(T'',\boldsymbol{D})
}}\bigl(t-2d(Q',Q'')\bigr)=\mathfrak{q}(T')\mathfrak{q}(T'')t-2\sum_{\substack{Q'\in\boldsymbol{Q}(T',\boldsymbol{D})\; ,\\
Q''\in\boldsymbol{Q}(T'',\boldsymbol{D})}} d(Q',Q'')\; .
\end{multline*}

\begin{remark}
If $T',T''\in\mathcal{T}$ are topes of a simple oriented matroid~\mbox{$\mathcal{M}:=(E_t,\mathcal{T})$} with a symmetric cycle~$\boldsymbol{D}$ in its tope graph, then we have
\begin{equation*}
-d(T',T'')+\sum_{\substack{Q'\in\boldsymbol{Q}(T',\boldsymbol{D})\; ,\\
Q''\in\boldsymbol{Q}(T'',\boldsymbol{D})}} d(Q',Q'')=\tfrac{1}{2}\bigl(\mathfrak{q}(T')\mathfrak{q}(T'')-1\bigr)t\; .
\end{equation*}
\end{remark}

\section{Statistics on the negative parts of vertices of the hypercube graph $\boldsymbol{H}(t,2)$ and on the decompositions of vertices}
\label{sec:8}

Throughout this main section we deal exclusively with the distinguished symmetric cycle~$\boldsymbol{R}$, defined by~(\ref{eq:17})(\ref{eq:18}), in the hypercube graph~$\boldsymbol{H}(t,2)$ on the vertex set~$\{1,-1\}^t$.

We present statistics on the decompositions~$\boldsymbol{Q}({}_{-A}\mathrm{T}^{(+)},\boldsymbol{R})$, $\boldsymbol{Q}({}_{-B}\mathrm{T}^{(+)},\boldsymbol{R})$ and $\boldsymbol{Q}(T,\boldsymbol{R})$ of vertices~${}_{-A}\mathrm{T}^{(+)}$, ${}_{-B}\mathrm{T}^{(+)}$ and $T$ such that $T^-=A\cup B$.

The following situations are considered separately:

\noindent$\bullet$ $|A\cap B|=0$ and $|A\,\dot{\cup}\, B|=t$, in Subsection~\ref{sec:1},

\noindent$\bullet$ $|A\cap B|=0$ and $|A\,\dot{\cup}\, B|<t$, in Subsection~\ref{sec:2},

\noindent$\bullet$ $|A\cap B|>0$ and $|A\cup B|=t$, in Subsection~\ref{sec:3}, and

\noindent$\bullet$ $|A\cap B|>0$ and $|A\cup B|<t$, in Subsection~\ref{sec:4}.

Before proceeding to statistics on decompositions, in the next subsection we recall well-known general information on vertex pairs of the discrete hypercube~$\{1,-1\}^t$.

\subsection{Vertex pairs of the discrete hypercube~$\{1,-1\}^t$} $\quad$

We denote by $\#$ the number of tuples in a family.

\begin{remark} Let $j',j''\in[t]$. We have
\begin{itemize}
\item[\rm(i)]
\begin{itemize}
\item[\rm(a)]
\begin{equation*}
\#\bigl\{(X,Y)\in\{1,-1\}^t\times\{1,-1\}^t\colon d(X,Y)=k\bigr\}=2^t\tbinom{t}{k}\; .
\end{equation*}
\item[\rm(b)]
If $t$ is even, then
\begin{equation*}
\#\bigl\{(X,Y)\in\{1,-1\}^t\times\{1,-1\}^t\colon \langle X,Y\rangle=0\bigr\}=2^t\tbinom{t}{t/2}\; .
\end{equation*}
\item[\rm(c)]
The {\em intersection numbers\/}~of the algebraic combinatorial {\em Hamming scheme\/}~$\mathbf{H}(t,2)$~\cite{GM},\cite[\S{}21.3]{MacWS} suggest that for any vertices $X$ and $Y$ of the discrete hypercube $\{1,-1\}^t$ such that \mbox{$\langle X,Y\rangle=0$}, we have
\begin{equation*}
\bigl|\bigl\{Z\in\{1,-1\}^t\colon \langle Z,X\rangle=\langle Z,Y\rangle=0\bigr\}\bigr|
=\begin{cases}
\tbinom{t/2}{t/4}^2\; , & \text{if $4|t$},\\
0\; , & \text{otherwise}.
\end{cases}
\end{equation*}
\end{itemize}

\item[\rm(ii)]
\begin{itemize}
\item[\rm(a)]
\begin{multline*}
\#\bigl\{(X,Y)\in\{1,-1\}^t\times\{1,-1\}^t\colon |X^-|=:j',\ |Y^-|=:j'',\ d(X,Y)=k\bigr\}
\\=
\begin{cases}
\tbinom{t}{k}\tbinom{t-k}{(j'+j''-k)/2}\tbinom{k}{(j'-j''+k)/2}\; , & \text{if $j'+j''+k$ even}\; ,\\
0\; , &  \text{if $j'+j''+k$ odd}\; .
\end{cases}
\end{multline*}
\item[\rm(b)]
If $t$ is even, then
\begin{multline*}
\#\bigl\{(X,Y)\in\{1,-1\}^t\times\{1,-1\}^t\colon |X^-|=:j',\ |Y^-|=:j'',\ \langle X,Y\rangle=0\bigr\}
\\=
\begin{cases}
\tbinom{t}{t/2}\tbinom{t/2}{(2j'+2j''-t)/4}\tbinom{k}{(2j'-2j''+t)/4}, & \text{if $j'+j''+\frac{t}{2}$ even}\; ,\\
0\; , &  \text{if $j'+j''+\frac{t}{2}$ odd}\; .
\end{cases}
\end{multline*}
\end{itemize}
\end{itemize}
\end{remark}

Let $s\in[t]$, and let $\tbinom{E_t}{s}$ denote the family of all subsets $A\subseteq E_t$ with~\mbox{$|A|=s$}.
Recall that if $s\leq 2t$, then the family $\tbinom{E_t}{s}$ can be viewed as the set of elements of the algebraic combinatorial {\em Johnson scheme\/} $\mathbf{J}(t,s)$~\cite{GM}.
Dealing with the $s$th layer $\tbinom{E_t}{s}$ of the Boolean lattice $\mathbb{B}(t)$ of subsets of the set~$E_t$, one often uses a measure of (dis)similarity $\partial(A,B)$ between $s$-sets $A$ and $B$, defined by
\begin{equation*}
\begin{split}
\partial(A,B):\!&=s-|A\cap B|=|A\cup B|-s=\tfrac{1}{2}|A\triangle B|\\
&=\tfrac{1}{2}d({}_{-A}\mathrm{T}^{(+)},{}_{-B}\mathrm{T}^{(+)})=\tfrac{1}{4}\bigl(t-\langle{}_{-A}\mathrm{T}^{(+)},{}_{-B}\mathrm{T}^{(+)}\rangle\bigr)
\; .
\end{split}
\end{equation*}
Note that we have
\begin{equation*}
A\in\tbinom{E_t}{s}\ni B\; ,\ \ \langle{}_{-A}\mathrm{T}^{(+)},{}_{-B}\mathrm{T}^{(+)}\rangle=0\ \ \ \Longleftrightarrow\ \ \
\partial(A,B)=\tfrac{t}{4}\; .
\end{equation*}

\begin{remark} If $s\in[t]$, then we have
\begin{itemize}
\item[\rm(a)]
\begin{multline*}
\#\bigl\{(X,Y)\in\{1,-1\}^t\times\{1,-1\}^t\colon X^-\in\tbinom{E_t}{s}\ni Y^-,\ \partial(X^-,Y^-)=i\bigr\}
\\=
\tbinom{t}{2i}\tbinom{t-2i}{s-i}\tbinom{2i}{i}
\; .
\end{multline*}

\item[\rm(b)]
\begin{multline*}
\#\bigl\{(X,Y)\in\{1,-1\}^t\times\{1,-1\}^t\colon X^-\in\tbinom{E_t}{s}\ni Y^-,\ \langle X,Y\rangle=0\bigr\}
\\=\begin{cases}
\tbinom{t}{t/2}\tbinom{t/2}{s-(t/4)}\tbinom{t/2}{t/4}, & \text{if $4|t$ and $\tfrac{t}{4}\leq s\leq\tfrac{3t}{4}$}\; ,\\
0\; , &  \text{otherwise}\; .
\end{cases}
\end{multline*}

\item[\rm(c)] Suppose that $4|t$, and $\tfrac{t}{4}\leq s\leq\tfrac{3t}{4}$. For any vertices $X$ and $Y$ of the discrete hypercube~$\{1,-1\}^t$ such that~$X^-,Y^-\in\tbinom{E_t}{s}$ and $\langle X,Y\rangle=0$, we have
    \begin{multline}
    \label{eq:21}
    \bigl|\bigl\{Z\in\{1,-1\}^t\colon Z^-\in\tbinom{E_t}{s}\; ,\ \langle Z,X\rangle=\langle Z,Y\rangle=0\bigr\}\bigr|
    \\=\sum_{c=\max\{0,s-(t/2)\}}^{\min\{t/4,s-(t/4)\}}\tbinom{s-(t/4)}{c}\tbinom{(3t/4)-s}{(t/4)-c}\tbinom{t/4}{s-(t/4)-c}^{\!2}\; .
    \end{multline}
        The quantity~{\rm(\ref{eq:21})} is suggested, in the case $2s\leq t$, by the {\em intersection numbers} of the {\em Johnson scheme} $\mathbf{J}(t,s)$.
\end{itemize}
\end{remark}

\subsection{Case: $|A\cap B|=0$ and $|A\,\dot{\cup}\,B|=t$}  $\quad$
\label{sec:1}

Let us consider the family
\begin{multline}
\label{eq:5}
\bigl\{(A,B)\in\mathbf{2}^{[t]}\times\mathbf{2}^{[t]}
\colon\ \ 0<|A|=:j'\; ,\ \  0<|B|=:j''\; ,\\
|A\cap B|=0\; ,\ \ |A\,\dot{\cup}\,B|=t\; ,\ \
\mathfrak{q}(A)=\ell'\; ,\ \
\mathfrak{q}(B)=\ell''
\bigr\}
\end{multline}
of ordered two-member {\em partitions\/} of the set $E_t$; thus, $j'+j''=t$. The integers $\ell',\ell''\in[t]$ are {\em odd\/}. For any pair~$(A,B)$ in this family, we have
\begin{equation*}
d({}_{-A}\mathrm{T}^{(+)},{}_{-B}\mathrm{T}^{(+)})=t\ \ \ \text{and}\ \ \
\langle {}_{-A}\mathrm{T}^{(+)},{}_{-B}\mathrm{T}^{(+)}\rangle=-t\; .
\end{equation*}

Note that if $\{1,t\}\cap A=\{1\}$ and $\{1,t\}\cap B=\{t\}$, with $\mathfrak{q}(A)=\ell'$ and~$\mathfrak{q}(B)=\ell''$, then we have
\begin{equation*}
\ \ \ \varrho(A)=\tfrac{\ell'+1}{2}\; ,\ \ \ \varrho(B)=\tfrac{\ell''+1}{2}\; ,
\end{equation*}
by~\cite[Prop.~2.4(i)(iv)]{M-SC-II}.

If $\{1,t\}\cap A=\{1,t\}$ and $|\{1,t\}\cap B|=0$, then we have
\begin{equation*}
\ \ \ \varrho(A)=\tfrac{\ell'+1}{2}\; ,\ \ \ \varrho(B)=\tfrac{\ell''-1}{2}\; ,
\end{equation*}
by~\cite[Prop.~2.4(ii)(iii)]{M-SC-II}.

\begin{remark} Ordered pairs of sets~$(A,B)$ in the family~{\rm(\ref{eq:5})} can be counted with the help of products of the form
\begin{equation*}
\mathtt{c}(\varrho(A);j')\cdot\mathtt{c}(\varrho(B);j'')\; :
\end{equation*}
\begin{itemize}
\item[\rm(i)]
In the family~{\rm(\ref{eq:5})} there are
\begin{equation}
\label{eq:14}
\mathtt{c}(\tfrac{\ell'+1}{2};j')\cdot\mathtt{c}(\tfrac{\ell''+1}{2};j'')
\end{equation}
pairs~$(A,B)$ of sets $A$ and $B$ such that
\begin{equation*}
\{1,t\}\cap A=\{1\}\ \ \ \text{and}\ \ \ \{1,t\}\cap B=\{t\}\; ,
\end{equation*}
and if the subfamily of these pairs is nonempty, then $\ell'=\ell''$.

\item[\rm(ii)]
In the family~{\rm(\ref{eq:5})} there are
\begin{equation*}
\mathtt{c}(\tfrac{\ell'+1}{2};j')\cdot\mathtt{c}(\tfrac{\ell''-1}{2};j'')
\end{equation*}
pairs~$(A,B)$ such that
\begin{equation*}
\{1,t\}\cap A=\{1,t\}\ \ \ \text{and}\ \ \ |\{1,t\}\cap B|=0\; ,
\end{equation*}
and if the subfamily of these pairs is nonempty, then $\ell'=\ell''$.
\end{itemize}
\end{remark}

\subsection{Case: $|A\cap B|=0$ and $|A\,\dot{\cup}\,B|<t$}  $\quad$
\label{sec:2}

Let us consider the family
\begin{multline}
\label{eq:6}
\bigl\{(A,B)\in\mathbf{2}^{[t]}\times\mathbf{2}^{[t]}
\colon\ \ 0<|A|=:j'\; ,\ \  0<|B|=:j''\; ,\\
|A\cap B|=0\; ,\ \ |A\,\dot{\cup}\,B|<t\; ,\ \
\mathfrak{q}(A)=\ell'\; ,\ \
\mathfrak{q}(B)=\ell''\; ,\ \
\mathfrak{q}(A\,\dot{\cup}\,B)=\ell
\bigr\}
\end{multline}
of ordered pairs of {\em disjoint\/} subsets of the set~$E_t$, with the {\em odd\/} integers~\mbox{$\ell',\ell'',\ell\in[t]$}.
Note that for pairs~$(A,B)$ in this family, we have
\begin{equation*}
d({}_{-A}\mathrm{T}^{(+)},{}_{-B}\mathrm{T}^{(+)})=j'+j''\; ,
\end{equation*}
and
\begin{equation*}
\langle {}_{-A}\mathrm{T}^{(+)},{}_{-B}\mathrm{T}^{(+)}\rangle=0\ \ \ \Longleftrightarrow\ \ \ 2(j'+j'')=t\; .
\end{equation*}

\begin{proposition}{\rm\cite[Th.~4.1, rephrased and abridged]{M-SC-III}}{\bf:}
Ordered pairs of sets $(A,B)$ in the family~{\rm(\ref{eq:6})} can be counted with the help of products of the form
\begin{multline*}
\mathfrak{T}\left(\mathfrak{s}',\; \mathfrak{s}'';\; \varrho(E_t-(A\,\dot{\cup}\,B)),\; \varrho(A),\; \varrho(B)\right)
\\ \times\mathtt{c}\bigl(\varrho(E_t-(A\,\dot{\cup}\,B));t-(j'+j'')\bigr)\cdot
\mathtt{c}(\varrho(A);j')
\cdot\mathtt{c}(\varrho(B);j''):
\end{multline*}
\begin{itemize}
\item[\rm(i)] In the family~{\rm(\ref{eq:6})} there are
\begin{equation*}
\mathfrak{T}\left(\theta,\theta;\tfrac{\ell+1}{2},\tfrac{\ell'-1}{2}, \tfrac{\ell''-1}{2}\right)
\cdot\mathtt{c}\bigl(\tfrac{\ell+1}{2};t-(j'+j'')\bigr)\cdot
\mathtt{c}(\tfrac{\ell'-1}{2};j')
\cdot\mathtt{c}(\tfrac{\ell''-1}{2};j'')
\end{equation*}
pairs~$(A,B)$ of sets $A$ and $B$ such that
\begin{equation*}
|\{1,t\}\cap A|=|\{1,t\}\cap B|=0\; .
\end{equation*}

In the family~{\rm(\ref{eq:6})} there are
\item[\rm(ii)]
\begin{equation*}
\mathfrak{T}\left(\alpha,\alpha;\tfrac{\ell-1}{2}, \tfrac{\ell'+1}{2}, \tfrac{\ell''-1}{2}\right)\cdot
\mathtt{c}\bigl(\tfrac{\ell-1}{2};t-(j'+j'')\bigr)\cdot
\mathtt{c}(\tfrac{\ell'+1}{2};j')
\cdot\mathtt{c}(\tfrac{\ell''-1}{2};j'')
\end{equation*}
pairs~$(A,B)$  such that
\begin{equation}
\{1,t\}\cap A=\{1,t\}\ \ \ \text{and}\ \ \ |\{1,t\}\cap B|=0\; ;
\end{equation}

\item[\rm(iii)]
\begin{equation*}
\mathfrak{T}\left(\theta,\beta;\tfrac{\ell+1}{2},\tfrac{\ell'-1}{2},\tfrac{\ell''+1}{2}\right)\cdot
\mathtt{c}\bigl(\tfrac{\ell+1}{2};t-(j'+j'')\bigr)\cdot
\mathtt{c}(\tfrac{\ell'-1}{2};j')
\cdot\mathtt{c}(\tfrac{\ell''+1}{2};j'')
\end{equation*}
pairs~$(A,B)$  such that
\begin{equation*}
|\{1,t\}\cap A|=0\ \ \ \text{and} \ \ \ \{1,t\}\cap B=\{t\}\; ;
\end{equation*}

\item[\rm(iv)]
\begin{equation*}
\mathfrak{T}\left(\alpha,\theta;\tfrac{\ell+1}{2},\tfrac{\ell'+1}{2}, \tfrac{\ell''-1}{2}\right)
\cdot\mathtt{c}\bigl(\tfrac{\ell+1}{2};t-(j'+j'')\bigr)\cdot
\mathtt{c}(\tfrac{\ell'+1}{2};j')
\cdot\mathtt{c}(\tfrac{\ell''-1}{2};j'')
\end{equation*}
pairs~$(A,B)$  such that
\begin{equation*}
\{1,t\}\cap A=\{1\}\ \ \ \text{and}\ \ \ |\{1,t\}\cap B|=0\; ;
\end{equation*}

\item[\rm(v)]
\begin{equation*}
\mathfrak{T}\left(\alpha,\beta; \tfrac{\ell-1}{2}, \tfrac{\ell'+1}{2}, \tfrac{\ell''+1}{2}\right)\cdot
\mathtt{c}\bigl(\tfrac{\ell-1}{2};t-(j'+j'')\bigr)\cdot
\mathtt{c}(\tfrac{\ell'+1}{2};j')
\cdot\mathtt{c}(\tfrac{\ell''+1}{2};j'')
\end{equation*}
pairs~$(A,B)$  such that
\begin{equation*}
\{1,t\}\cap A=\{1\}\ \ \ \text{and}\ \ \ \{1,t\}\cap B=\{t\}\; .
\end{equation*}
\end{itemize}
\end{proposition}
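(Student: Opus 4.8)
The plan is to translate the problem from sets to words. An ordered pair $(A,B)$ of disjoint subsets of $E_t$ with $|A\,\dot{\cup}\,B|<t$ is the same datum as an ordered partition $E_t=A\,\dot{\cup}\,B\,\dot{\cup}\,C$ with $C:=E_t-(A\,\dot{\cup}\,B)\neq\emptyset$, $0<|A|=j'$, $0<|B|=j''$. Look at the maximal runs of consecutive integers of the row $1,2,\dots,t$ that lie entirely inside one of the three parts; since two adjacent runs lie in different parts, the sequence of the ``colours'' of these runs is a Smirnov word $w$ over the alphabet $(\theta,\alpha,\beta)$, where $\theta,\alpha,\beta$ mark the runs inside $C,A,B$. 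Its Parikh vector is $\bigl(\varrho(C),\varrho(A),\varrho(B)\bigr)$; its first letter records which part contains the element $1$, and its last letter which part contains $t$. The first step is to check that $(A,B)\mapsto w$ extends to a bijection between the partitions under consideration and the triples consisting of such a Smirnov word together with (chosen independently) a composition of $j'$ into $\varrho(A)$ positive parts, a composition of $j''$ into $\varrho(B)$ parts, and a composition of $t-(j'+j'')=|C|$ into $\varrho(C)$ parts --- these being the run lengths within $A$, within $B$ and within $C$, read from left to right, and their total being automatically $t$. It follows that the number of pairs $(A,B)$ in family~{\rm(\ref{eq:6})} with prescribed $\varrho(A),\varrho(B),\varrho(C)$, and with $1$ in the part coded by $\mathfrak{s}'$ and $t$ in the part coded by $\mathfrak{s}''$, equals
\begin{equation*}
\mathfrak{T}\bigl(\mathfrak{s}',\mathfrak{s}'';\varrho(C),\varrho(A),\varrho(B)\bigr)\cdot\mathtt{c}\bigl(\varrho(C);t-(j'+j'')\bigr)\cdot\mathtt{c}\bigl(\varrho(A);j'\bigr)\cdot\mathtt{c}\bigl(\varrho(B);j''\bigr)\; .
\end{equation*}

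Next I would turn the hypotheses $\mathfrak{q}(A)=\ell'$, $\mathfrak{q}(B)=\ell''$, $\mathfrak{q}(A\,\dot{\cup}\,B)=\ell$ into constraints on $\varrho(A),\varrho(B),\varrho(C)$ via \cite[Prop.~2.4]{M-SC-II}, which provides the dictionary: for a subset $S\subseteq E_t$ one has $\mathfrak{q}(S)=2\varrho(S)+1$ if neither $1$ nor $t$ belongs to $S$, and $\mathfrak{q}(S)=2\varrho(S)-1$ otherwise. For $A$ and $B$ this is immediate, because each of the five configurations of $\bigl(\{1,t\}\cap A,\ \{1,t\}\cap B\bigr)$ listed in the statement fixes, for $A$ and for $B$, whether the set meets $\{1,t\}$, and the dictionary then returns exactly the displayed values $\tfrac{\ell'\pm1}{2}$ for $\varrho(A)$ and $\tfrac{\ell''\pm1}{2}$ for $\varrho(B)$.

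The genuinely delicate step is the constraint on $\mathfrak{q}(A\,\dot{\cup}\,B)$, since $A\,\dot{\cup}\,B=E_t-C$ is the complement of the set that is being carried along rather than one of $A,B,C$ directly. Here I would use the elementary count that the maximal runs of $E_t-C$ are precisely the $\varrho(C)-1$ internal gaps between consecutive runs of $C$, plus one further run at the left end exactly when $1\notin C$ and one further run at the right end exactly when $t\notin C$; hence $\varrho(E_t-C)=\varrho(C)-1$, increased by $1$ for each extremity of the row avoiding $C$. Combining this with the same $\mathfrak{q}$--$\varrho$ dictionary applied to $A\,\dot{\cup}\,B$ (whose membership of $1$ and of $t$ is again determined by the configuration at hand) gives $\varrho(C)=\tfrac{\ell+1}{2}$ in cases (i),(iii),(iv) and $\varrho(C)=\tfrac{\ell-1}{2}$ in cases (ii),(v). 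Substituting the three values of $\varrho(A),\varrho(B),\varrho(C)$ so obtained into the product displayed above produces each of the five formulas; the four $\bigl(\{1,t\}\cap A,\ \{1,t\}\cap B\bigr)$-configurations not listed are reached from (ii),(iii),(iv),(v) by interchanging $A$ and $B$ (the configuration in (i) being self-symmetric), which accounts for the word ``abridged''. I expect the main obstacle to be precisely this bookkeeping around $E_t-C$: one must keep track, at once and consistently for $A$, $B$ and $A\,\dot{\cup}\,B$, of the two indicators ``$1$ belongs'' and ``$t$ belongs'', and verify that the resulting half-integers $\tfrac{\ell'\pm1}{2},\tfrac{\ell''\pm1}{2},\tfrac{\ell\pm1}{2}$ --- which are integers because $\ell',\ell'',\ell$ are odd --- remain compatible, via vanishing composition counts, with the degenerate boundary cases such as $\varrho(C)=0$.
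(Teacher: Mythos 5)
Your proposal is correct and follows essentially the method this paper uses for the analogous theorems in Subsections 4.3--4.4 (and which underlies the cited result): encode the ordered partition $E_t=A\,\dot{\cup}\,B\,\dot{\cup}\,C$ by the Smirnov word of run-colours together with the compositions of $|A|$, $|B|$, $|C|$ given by the run lengths, then convert the $\mathfrak{q}$-values to $\varrho$-values via \cite[Prop.~2.4]{M-SC-II}, including the shift $\varrho(E_t-C)=\varrho(C)-1+[1\notin C]+[t\notin C]$. Note only that the paper states this proposition without proof, importing it from \cite[Th.~4.1]{M-SC-III}; your case-by-case values of $\varrho(A)$, $\varrho(B)$, $\varrho(C)$ all check out against the displayed formulas.
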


\subsection{Case: $|A\cap B|>0$ and $|A\cup B|=t$}  $\quad$
\label{sec:3}

\noindent$\bullet$ Here we consider the family
\begin{multline}
\label{eq:7}
\bigl\{(A,B)\in\mathbf{2}^{[t]}\times\mathbf{2}^{[t]}
\colon\ \ |A|=:j'\; ,\ \  |B|=:j''\; ,\\
0<|A\cap B|
\; ,\ \ \max\{j',j''\}<|A\cup B|=t\; ,\\
\mathfrak{q}(A\cap B)=\ell^{\cap}\; ,\ \
\mathfrak{q}(A-B)=\ell'\; ,\ \
\mathfrak{q}(B-A)=\ell''
\bigr\}
\end{multline}
of ordered two-member {\em intersecting Sperner families\/} that {\em cover\/} the set $E_t$; the integers~$\ell^{\cap},\ell',\ell''\in[t]$ are {\em odd}. Note that for pairs~$(A,B)$ in this family, we have
\begin{equation}
\label{eq:24}
d({}_{-A}\mathrm{T}^{(+)},{}_{-B}\mathrm{T}^{(+)})=2t-j'-j''\; ,
\end{equation}
and
\begin{equation}
\label{eq:25}
\langle {}_{-A}\mathrm{T}^{(+)},{}_{-B}\mathrm{T}^{(+)}\rangle=0\ \ \ \Longleftrightarrow\ \ \ 2(j'+j'')=3t\; .
\end{equation}

\begin{theorem}
Ordered pairs of sets $(A,B)$ in the family~{\rm(\ref{eq:7})} can be counted with the help of products of the form
\begin{multline}
\label{eq:20}
\mathfrak{T}\left(\mathfrak{s}',\; \mathfrak{s}'';\; \varrho(A \cap B),\; \varrho(A-B),\; \varrho(B-A)\right)
\\ \times \mathtt{c}\bigl(\varrho(A \cap B);(j'+j'')-t\bigr)
\\ \times \mathtt{c}\bigl(\varrho(A-B);t-j''\bigr)
\cdot \mathtt{c}\bigl(\varrho(B-A);t-j'\bigr)\; :
\end{multline}
\begin{itemize}
\item[\rm(i)] In the family~{\rm(\ref{eq:7})} there are
\begin{equation*}
\mathfrak{T}\left(\theta,\beta;\tfrac{\ell^{\cap}+1}{2},\tfrac{\ell'-1}{2},\tfrac{\ell''+1}{2}\right)
\cdot \mathtt{c}\bigl(\tfrac{\ell^{\cap}+1}{2};(j'+j'')-t\bigr)\cdot \mathtt{c}\bigl(\tfrac{\ell'-1}{2};t-j''\bigr)
\cdot \mathtt{c}\bigl(\tfrac{\ell''+1}{2};t-j'\bigr)
\end{equation*}
pairs $(A,B)$ of sets $A$ and $B$ such that
\begin{equation*}
\{1,t\}\cap A=\{1\}\ \ \ \text{and}\ \ \ \{1,t\}\cap B=\{1,t\}\; .
\end{equation*}

In the family~{\rm(\ref{eq:7})} there are
\item[\rm(ii)]
\begin{equation*}
\mathfrak{T}\left(\alpha,\beta;\tfrac{\ell^{\cap}-1}{2},\tfrac{\ell'+1}{2},\tfrac{\ell''+1}{2}\right)
\cdot \mathtt{c}\bigl(\tfrac{\ell^{\cap}-1}{2};(j'+j'')-t\bigr)\cdot \mathtt{c}\bigl(\tfrac{\ell'+1}{2};t-j''\bigr)
\cdot \mathtt{c}\bigl(\tfrac{\ell''+1}{2};t-j'\bigr)
\end{equation*}
pairs $(A,B)$ such that
\begin{equation*}
\{1,t\}\cap A=\{1\}\ \ \ \text{and}\ \ \ \{1,t\}\cap B=\{t\}\; ;
\end{equation*}

\item[\rm(iii)]
\begin{equation*}
\mathfrak{T}\left(\theta,\theta;\tfrac{\ell^{\cap}+1}{2},\tfrac{\ell'-1}{2},\tfrac{\ell''-1}{2}\right)
\cdot \mathtt{c}\bigl(\tfrac{\ell^{\cap}+1}{2};(j'+j'')-t\bigr)\cdot \mathtt{c}\bigl(\tfrac{\ell'-1}{2};t-j''\bigr)
\cdot \mathtt{c}\bigl(\tfrac{\ell''-1}{2};t-j'\bigr)
\end{equation*}
pairs $(A,B)$ such that
\begin{equation*}
\{1,t\}\cap A=\{1,t\}\cap B=\{1,t\}\; ;
\end{equation*}

\item[\rm(iv)]
\begin{equation*}
\mathfrak{T}\left(\alpha,\alpha;\tfrac{\ell^{\cap}-1}{2},\tfrac{\ell'+1}{2},\tfrac{\ell''-1}{2}\right)
\cdot \mathtt{c}\bigl(\tfrac{\ell^{\cap}-1}{2};(j'+j'')-t\bigr)\cdot \mathtt{c}\bigl(\tfrac{\ell'+1}{2};t-j''\bigr)
\cdot \mathtt{c}\bigl(\tfrac{\ell''-1}{2};t-j'\bigr)
\end{equation*}
pairs $(A,B)$ such that
\begin{equation*}
\{1,t\}\cap A=\{1,t\}\ \ \ \text{and}\ \ \ |\{1,t\}\cap B|=0\; ;
\end{equation*}

\item[\rm(v)]
\begin{equation*}
\mathfrak{T}\left(\alpha,\theta;\tfrac{\ell^{\cap}+1}{2},\tfrac{\ell'+1}{2},\tfrac{\ell''-1}{2}\right)
\cdot \mathtt{c}\bigl(\tfrac{\ell^{\cap}+1}{2};(j'+j'')-t\bigr)\cdot \mathtt{c}\bigl(\tfrac{\ell'+1}{2};t-j''\bigr)
\cdot \mathtt{c}\bigl(\tfrac{\ell''-1}{2};t-j'\bigr)
\end{equation*}
pairs $(A,B)$ such that
\begin{equation*}
\{1,t\}\cap A=\{1,t\}\ \ \ \text{and}\ \ \ \{1,t\}\cap B=\{t\}\; .
\end{equation*}
\end{itemize}
\end{theorem}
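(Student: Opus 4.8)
The plan is to set up an explicit bijection between the subfamily of~(\ref{eq:7}) singled out in each of the five items and a set of tuples consisting of one ternary Smirnov word together with three ordered integer compositions, mirroring the proof of~\cite[Th.~4.1]{M-SC-III} recalled in Subsection~\ref{sec:2}; the only structural difference is that the ``outer'' block $E_t-(A\,\dot{\cup}\,B)$ of that earlier setting is replaced here by the block $A\cap B$.

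First I would record that for any pair $(A,B)$ in~(\ref{eq:7}) the three sets $A\cap B$, $A-B$, $B-A$ form a partition of $E_t$ into \emph{nonempty} blocks: $A\cap B\neq\varnothing$ by hypothesis, while $A-B\neq\varnothing$ and $B-A\neq\varnothing$ because $\max\{j',j''\}<|A\cup B|=t$ rules out $A\subseteq B$ and $B\subseteq A$. Since these blocks tile $[t]$ with no gaps, listing the maximal monochromatic runs of $[t]$ from left to right and recording, for each run, the block it belongs to yields a word $w$ over the alphabet $(\theta,\alpha,\beta)$ under the correspondence $\theta\leftrightarrow A\cap B$, $\alpha\leftrightarrow A-B$, $\beta\leftrightarrow B-A$; consecutive runs carry distinct letters, so $w$ is a ternary Smirnov word, its Parikh vector is $\bigl(\varrho(A\cap B),\varrho(A-B),\varrho(B-A)\bigr)$, its first letter is the colour of the block containing $1$, and its last letter the colour of the block containing $t$. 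Conversely, $(A,B)$ is recovered without ambiguity from $w$ together with the ordered tuples of \emph{sizes} of the $\theta$-, $\alpha$- and $\beta$-runs, which are ordered compositions of $|A\cap B|$, $|A-B|$, $|B-A|$ into $\varrho(A\cap B)$, $\varrho(A-B)$, $\varrho(B-A)$ positive parts. Since $|A|=j'$, $|B|=j''$ and $|A\cup B|=t$ give $|A\cap B|=(j'+j'')-t$, $|A-B|=t-j''$ and $|B-A|=t-j'$, the number of such tuples with prescribed $\varrho$-values and prescribed first/last letters is exactly the product displayed in~(\ref{eq:20}).

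Then I would run through the five cases. In each, the prescribed values of $\{1,t\}\cap A$ and $\{1,t\}\cap B$ pin down (a) which block contains the element $1$ and which contains $t$, hence the first letter $\mathfrak{s}'$ and the last letter $\mathfrak{s}''$ of $w$, and (b) for each block $S\in\{A\cap B,A-B,B-A\}$ whether $S$ meets $\{1,t\}$; by~\cite[Prop.~2.4]{M-SC-II} --- equivalently, $\mathfrak{q}(S)=2\varrho(S)-1$ when $S$ contains $1$ or $t$ and $\mathfrak{q}(S)=2\varrho(S)+1$ otherwise --- this turns $\mathfrak{q}(A\cap B)=\ell^{\cap}$, $\mathfrak{q}(A-B)=\ell'$, $\mathfrak{q}(B-A)=\ell''$ into the $\varrho$-values appearing in the five formulas. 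For instance, item~(i) has $1\in A\cap B$ and $t\in B-A$ while $A-B$ meets neither endpoint, so $\mathfrak{s}'=\theta$, $\mathfrak{s}''=\beta$, $\varrho(A\cap B)=\tfrac{\ell^{\cap}+1}{2}$, $\varrho(A-B)=\tfrac{\ell'-1}{2}$, $\varrho(B-A)=\tfrac{\ell''+1}{2}$, and substituting into~(\ref{eq:20}) yields the stated count; items~(ii)--(v) are entirely analogous.

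I expect the only real work to be this case bookkeeping --- matching each of the five endpoint configurations to the pair $(\mathfrak{s}',\mathfrak{s}'')$ and to the three $\varrho\leftrightarrow\ell$ conversions --- together with checking that the covering and Sperner hypotheses genuinely force all three blocks to be nonempty, which is what guarantees that the run-decomposition is a bona fide Smirnov word. Beyond that the identities follow from the bijection plus the usual convention that $\mathtt{c}(m;n)$, and hence the whole product, vanishes when no admissible composition exists, so that the formulas correctly report the value $0$ on an empty subfamily.
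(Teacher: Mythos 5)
Your proposal is correct and follows essentially the same route as the paper: encode each pair $(A,B)$ by the ternary Smirnov word recording the left-to-right order of the maximal intervals of $A\cap B$, $A-B$, $B-A$ (which tile $E_t$ since $|A\cup B|=t$) together with the three run-length compositions, and then convert $\mathfrak{q}$-values to $\varrho$-values case by case via \cite[Prop.~2.4]{M-SC-II}. Your unified restatement of that conversion ($\mathfrak{q}(S)=2\varrho(S)-1$ when $S$ meets $\{1,t\}$, and $2\varrho(S)+1$ otherwise) and your explicit check that all three blocks are nonempty are just slightly more detailed versions of what the paper does implicitly.
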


\begin{proof}
For each pair $(A,B)$ in the family~{\rm(\ref{eq:7})}, pick an {\em ascending system\/} of {\em distinct
representatives}~$(e_1<e_2<\ldots<e_{\varrho(A\cap B)+\varrho(A-B)+\varrho(B-A)})\subseteq E_t$ of the
intervals composing the
sets~$(A\cap B)$, $(A-B)$ and~$(B-A)$. By making the substitutions
\begin{equation*}
e_i\mapsto
\begin{cases}
\theta\; , & \text{if $e_i\in A\cap B$},\\
\alpha\; , & \text{if $e_i\in A-B$},\\
\beta\; , & \text{if $e_i\in B-A$},
\end{cases}
\ \ \ \ \ 1\leq i\leq\varrho(A\cap B)+\varrho(A-B)+\varrho(B-A)\; ,
\end{equation*}
we obtain some ternary Smirnov words over the alphabet~$(\theta,\alpha,\beta)$. Given any such Smirnov word, we find the number of pairs~$(A,B)$ in the corresponding subfamily of the family~{\rm(\ref{eq:7})} by means of a product of the form~(\ref{eq:20}).

\begin{itemize}
\item[\rm(i)]
Since $\{1,t\}\cap A=\{1\}$, and $\{1,t\}\cap B=\{1,t\}$, we have
\begin{equation*}
\ \ \ \varrho(A-B)=\tfrac{\ell'-1}{2}\; ,\ \ \ \varrho(B-A)=\tfrac{\ell''+1}{2}\; ,
\end{equation*}
by~\cite[Prop.~2.4(iii)(iv)]{M-SC-II}, and
\begin{equation*}
\ \ \ \varrho(A\cap B)=\tfrac{\ell^{\cap}+1}{2}\; ,
\end{equation*}
by~\cite[Prop.~2.4(i)]{M-SC-II}.

\item[\rm(ii)]
Since $\{1,t\}\cap A=\{1\}$, and $\{1,t\}\cap B=\{t\}$, we have
\begin{equation*}
\ \ \ \varrho(A-B)=\tfrac{\ell'+1}{2}\; ,\ \ \ \varrho(B-A)=\tfrac{\ell''+1}{2}\; ,
\end{equation*}
by~\cite[Prop.~2.4(i)(iv)]{M-SC-II}, and
\begin{equation*}
\ \ \ \varrho(A\cap B)=\tfrac{\ell^{\cap}-1}{2}\; ,
\end{equation*}
by~\cite[Prop.~2.4(iii)]{M-SC-II}.

\item[\rm(iii)]
Since $\{1,t\}\cap A=\{1,t\}\cap B=\{1,t\}$, we have
\begin{equation*}
\ \ \ \varrho(A-B)=\tfrac{\ell'-1}{2}\; ,\ \ \ \varrho(B-A)=\tfrac{\ell''-1}{2}\; ,
\end{equation*}
by~\cite[Prop.~2.4(iii)]{M-SC-II}, and
\begin{equation*}
\ \ \ \varrho(A\cap B)=\tfrac{\ell^{\cap}+1}{2}\; ,
\end{equation*}
by~\cite[Prop.~2.4(ii)]{M-SC-II}.

\item[\rm(iv)]
Since $\{1,t\}\cap A=\{1,t\}$, and $|\{1,t\}\cap B|=0$, we have
\begin{equation*}
\ \ \ \varrho(A-B)=\tfrac{\ell'+1}{2}\; ,\ \ \ \varrho(B-A)=\tfrac{\ell''-1}{2}\; ,
\end{equation*}
by~\cite[Prop.~2.4(ii)(iii)]{M-SC-II}, and
\begin{equation*}
\ \ \ \varrho(A\cap B)=\tfrac{\ell^{\cap}-1}{2}\; ,
\end{equation*}
by~\cite[Prop.~2.4(iii)]{M-SC-II}.

\item[\rm(v)]
Since $\{1,t\}\cap A=\{1,t\}$, and $\{1,t\}\cap B=\{t\}$, we have
\begin{equation*}
\ \ \ \varrho(A-B)=\tfrac{\ell'+1}{2}\; ,\ \ \ \varrho(B-A)=\tfrac{\ell''-1}{2}\; ,
\end{equation*}
by~\cite[Prop.~2.4(i)(iii)]{M-SC-II}, and
\begin{equation*}
\ \ \ \varrho(A\cap B)=\tfrac{\ell^{\cap}+1}{2}\; ,
\end{equation*}
by~\cite[Prop.~2.4(iv)]{M-SC-II}.
\end{itemize}
\end{proof}

\noindent$\bullet$ Let us consider the
family
\begin{multline}
\label{eq:11}
\bigl\{(A,B)\in\mathbf{2}^{[t]}\times\mathbf{2}^{[t]}
\colon\ \ |A|=:j'\; ,\ \  |B|=:j''\; ,\\
0<|A\cap B|
\; ,\ \ \max\{j',j''\}<|A\cup B|=t\; ,\\
\mathfrak{q}(A\cap B)=\ell^{\cap}\; ,\ \
\mathfrak{q}(A\triangle B)=\ell^{\triangle}
\bigr\}
\end{multline}
of ordered two-member {\em intersecting Sperner families\/} that {\em cover\/} the set $E_t$, with the properties~(\ref{eq:24})(\ref{eq:25}).

\begin{theorem}
\label{th:3}
Ordered pairs of sets~$(A,B)$ in the family~{\rm(\ref{eq:11})} can be counted with the help of products that include subproducts of the form
\begin{equation*}
\mathtt{c}\bigl(\varrho(A\cap B);(j'+j'')-t\bigr)\cdot\mathtt{c}\bigl(\varrho(A\triangle B);2t-(j'+j'')\bigr)\; :
\end{equation*}
\begin{itemize}
\item[\rm(i)]
In the family~{\rm(\ref{eq:11})} there are
\begin{equation*}
\mathtt{c}\bigl(\tfrac{\ell^{\cap}+1}{2};(j'+j'')-t\bigr)\cdot\mathtt{c}\bigl(\tfrac{\ell^{\triangle}+1}{2};2t-(j'+j'')\bigr)
\cdot\tbinom{2t-(j'+j'')-1}{t-j''}
\end{equation*}
pairs~$(A,B)$ of sets $A$ and $B$ such that
\begin{equation*}
\{1,t\}\cap A=\{1\}\ \ \ \text{and}\ \ \ \{1,t\}\cap B=\{1,t\}\; ,
\end{equation*}
and if the subfamily of these pairs is nonempty, then $\ell^{\cap}=\ell^{\triangle}$.
\end{itemize}

In the family~{\rm(\ref{eq:11})} there are
\begin{itemize}
\item[\rm(ii)]
\begin{equation*}
\mathtt{c}\bigl(\tfrac{\ell^{\cap}-1}{2};(j'+j'')-t\bigr)\cdot\mathtt{c}\bigl(\tfrac{\ell^{\triangle}+1}{2};2t-(j'+j'')\bigr)
\cdot\tbinom{2t-(j'+j'')-2}{t-j''-1}
\end{equation*}
pairs~$(A,B)$ such that
\begin{equation*}
\{1,t\}\cap A=\{1\}\ \ \ \text{and}\ \ \ \{1,t\}\cap B=\{t\}\; ,
\end{equation*}
and if the subfamily of these pairs is nonempty, then $\ell^{\cap}=\ell^{\triangle}$.

\item[\rm(iii)]
\begin{equation*}
\mathtt{c}\bigl(\tfrac{\ell^{\cap}+1}{2};(j'+j'')-t\bigr)\cdot\mathtt{c}\bigl(\tfrac{\ell^{\triangle}-1}{2};2t-(j'+j'')\bigr)
\cdot\tbinom{2t-(j'+j'')}{t-j''}
\end{equation*}
pairs~$(A,B)$ such that
\begin{equation*}
\{1,t\}\cap A=\{1,t\}\cap B=\{1,t\}\;
\end{equation*}
and if the subfamily of these pairs is nonempty, then $\ell^{\cap}=\ell^{\triangle}$.

\item[\rm(iv)]
\begin{equation*}
\mathtt{c}\bigl(\tfrac{\ell^{\cap}-1}{2};(j'+j'')-t\bigr)\cdot\mathtt{c}\bigl(\tfrac{\ell^{\triangle}+1}{2};2t-(j'+j'')\bigr)
\cdot\tbinom{2t-(j'+j'')-2}{t-j'}
\end{equation*}
pairs~$(A,B)$ such that
\begin{equation*}
\{1,t\}\cap A=\{1,t\}\ \ \ \text{and}\ \ \ |\{1,t\}\cap B|=0\; ,
\end{equation*}
and if the subfamily of these pairs is nonempty, then $\ell^{\cap}=\ell^{\triangle}$.

\item[\rm(v)]
\begin{equation*}
\mathtt{c}\bigl(\tfrac{\ell^{\cap}+1}{2};(j'+j'')-t\bigr)\cdot\mathtt{c}\bigl(\tfrac{\ell^{\triangle}+1}{2};2t-(j'+j'')\bigr)
\cdot\tbinom{2t-(j'+j'')-1}{t-j'}
\end{equation*}
pairs~$(A,B)$ such that
\begin{equation*}
\{1,t\}\cap A=\{1,t\}\ \ \ \text{and}\ \ \ \{1,t\}\cap B=\{t\}\; ,
\end{equation*}
and if the subfamily of these pairs is nonempty, then $\ell^{\cap}=\ell^{\triangle}$.
\end{itemize}
\end{theorem}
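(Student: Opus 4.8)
\noindent\emph{Proof proposal.}
The plan is to run the argument of the preceding theorem with the three-letter alphabet $(\theta,\alpha,\beta)$ collapsed to a two-letter one, since family~(\ref{eq:11}) records only the coarse datum $\mathfrak{q}(A\triangle B)$ in place of the finer pair $\mathfrak{q}(A-B),\mathfrak{q}(B-A)$. For $(A,B)$ in~(\ref{eq:11}) the hypothesis $A\cup B=E_t$ makes $A\cap B$ and $A\triangle B=E_t-(A\cap B)$ a two-member partition of $E_t$, with $|A\cap B|=(j'+j'')-t$ and $|A\triangle B|=2t-(j'+j'')$, both nonempty because $|A\cap B|>0$ and $\max\{j',j''\}<t$. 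First I would pick an ascending system of distinct representatives of the intervals composing $A\cap B$ and $A\triangle B$ and make the substitution $\theta$ for a representative of an $(A\cap B)$-interval and $\gamma$ for a representative of an $(A\triangle B)$-interval; since the two blocks partition $E_t$, consecutive intervals lie in different blocks, so the word obtained is Smirnov over $(\theta,\gamma)$, i.e.\ strictly alternating, with Parikh vector $(\varrho(A\cap B),\varrho(A\triangle B))$, and its first and last letters are fixed by whether $1$ and $t$ lie in $A\cap B$ or in $A\triangle B$.

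Next, in each of the five cases I would translate the hypothesis on $\{1,t\}\cap A$ and $\{1,t\}\cap B$ into a statement about $\{1,t\}\cap(A\cap B)$ and $\{1,t\}\cap(A\triangle B)$, using that $e\in A\cap B$ iff $e\in A$ and $e\in B$ and, since $e\in A\cup B=E_t$, that $e\in A\triangle B$ iff exactly one of $e\in A$, $e\in B$ holds; then~\cite[Prop.~2.4]{M-SC-II} reads off $\varrho(A\cap B)$ and $\varrho(A\triangle B)$ in terms of $\ell^{\cap}$ and $\ell^{\triangle}$. For instance, in case~(i) one obtains $\{1,t\}\cap(A\cap B)=\{1\}$ and $\{1,t\}\cap(A\triangle B)=\{t\}$, whence $\varrho(A\cap B)=\tfrac{\ell^{\cap}+1}{2}$ and $\varrho(A\triangle B)=\tfrac{\ell^{\triangle}+1}{2}$; the remaining four cases reproduce the $\tfrac{\ell^{\cap}\pm1}{2}$ and $\tfrac{\ell^{\triangle}\pm1}{2}$ values displayed in parts~(ii)--(v). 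Since an alternating word over $(\theta,\gamma)$ is uniquely determined once its first and last letters are known, and such a word with $m$ letters $\theta$ and $n$ letters $\gamma$ exists only for $m=n$ (when the endpoints differ) or for the appropriate one of $m=n\pm1$ (when the endpoints coincide), the length constraint forces $\varrho(A\cap B)$ and $\varrho(A\triangle B)$ either to agree or to differ by one, according to the case; after substituting the values from~\cite[Prop.~2.4]{M-SC-II} this is in every case exactly the assertion that $\ell^{\cap}=\ell^{\triangle}$ whenever the subfamily is nonempty. Finally, once the (unique) interval pattern is fixed, assigning positive sizes to the $\theta$-intervals and to the $\gamma$-intervals amounts to choosing a composition of $(j'+j'')-t$ into $\varrho(A\cap B)$ parts together with a composition of $2t-(j'+j'')$ into $\varrho(A\triangle B)$ parts, which accounts for the factor $\mathtt{c}\bigl(\varrho(A\cap B);(j'+j'')-t\bigr)\cdot\mathtt{c}\bigl(\varrho(A\triangle B);2t-(j'+j'')\bigr)$; substituting the case-by-case values of $\varrho$ then yields the five displayed products.

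The step I expect to be the main obstacle is the passage from the two-member partition back to the ordered pair $(A,B)$. Realizing $(A,B)$ via the three-member partition $E_t=(A\cap B)\,\dot{\cup}\,(A-B)\,\dot{\cup}\,(B-A)$ shows that a two-member partition $A\cap B\,\dot{\cup}\,A\triangle B$ carrying the prescribed interval data may be completed to an admissible $(A,B)$ in more than one way, namely by deciding how each $(A\triangle B)$-interval is split between $A-B$ and $B-A$, subject to $|A-B|=t-j''$, $|B-A|=t-j'$, and to the forced membership of $1$ and $t$; one must therefore check with care that this fibre is counted so that the total comes out exactly as the stated product of two $\mathtt{c}$'s, equivalently that summing the count of the preceding theorem over the admissible values of $\mathfrak{q}(A-B)$ and $\mathfrak{q}(B-A)$ collapses to that product. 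Apart from this, the remaining care lies in getting the five boundary cases right and in confirming that it is precisely the alternation of the two-member partition that produces the $\ell^{\cap}=\ell^{\triangle}$ clauses.
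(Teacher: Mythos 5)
Your reduction is, up to the point where you stop, exactly the paper's argument: the paper's proof of this theorem consists solely of reading off $\varrho(A\cap B)$ and $\varrho(A\triangle B)$ from \cite[Prop.~2.4]{M-SC-II} in each of the five cases, with the two-letter alternating-word count and the origin of the $\ell^{\cap}=\ell^{\triangle}$ clauses left implicit; you have supplied both of those correctly (in particular, your explanation of why nonemptiness forces $\ell^{\cap}=\ell^{\triangle}$ is the right one). The step you defer at the end, however, is a genuine gap, and the verification you propose does not go through. The product $\mathtt{c}\bigl(\varrho(A\cap B);(j'+j'')-t\bigr)\cdot\mathtt{c}\bigl(\varrho(A\triangle B);2t-(j'+j'')\bigr)$ counts the ordered two-block interval partitions $E_t=(A\cap B)\,\dot{\cup}\,(A\triangle B)$ with the prescribed data, and the fibre of $(A,B)\mapsto(A\cap B,A\triangle B)$ over such a partition is in general not a singleton, even after $|A|=j'$ and $|B|=j''$ and the membership of $1$ and $t$ are imposed.

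A concrete instance: take $t=6$, $j'=j''=4$, $\ell^{\cap}=\ell^{\triangle}=3$ in case (ii). The partition $A\cap B=\{3,4\}$, $A\triangle B=\{1,2\}\,\dot{\cup}\,\{5,6\}$ lifts to the two distinct admissible pairs $(\{1,2,3,4\},\{3,4,5,6\})$ and $(\{1,3,4,5\},\{2,3,4,6\})$, since $A-B$ may be taken to be $\{1,2\}$ or $\{1,5\}$; altogether the family contains six ordered pairs with these parameters, whereas the displayed product equals $\mathtt{c}(1;2)\cdot\mathtt{c}(2;4)=3$. Consistently with this, summing the counts of the preceding theorem (for the family~(\ref{eq:7}), case (ii)) over the admissible $(\ell',\ell'')$ gives $1+1+1+3=6$, so the ``collapse'' to the product of two $\mathtt{c}$'s that you hoped to verify does not occur. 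What your argument actually establishes, once the deferred step is confronted, is that the displayed products count the admissible sets $A\cap B$ (equivalently, the partitions $(A\cap B,A\triangle B)$), not the ordered pairs $(A,B)$; to count the latter one must either multiply by the fibre sizes or sum the finer counts of the preceding theorem. Your suspicion about this step was therefore well placed, and the discrepancy lies in the identification the paper makes silently rather than in your reduction.
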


\begin{proof}
\begin{itemize}
\item[\rm(i)]
Since $\{1,t\}\cap A=\{1\}$, and $\{1,t\}\cap B=\{1,t\}$, we have
\begin{equation*}
\ \ \ \varrho(A\cap B)=\tfrac{\ell^{\cap}+1}{2}\; ,\ \ \ \varrho(A\triangle B)=\tfrac{\ell^{\triangle}+1}{2}\; ,
\end{equation*}
by~\cite[Prop.~2.4(i)(iv)]{M-SC-II}.

\item[\rm(ii)]
Since $\{1,t\}\cap A=\{1\}$, and $\{1,t\}\cap B=\{t\}$, we have
\begin{equation*}
\ \ \ \varrho(A\cap B)=\tfrac{\ell^{\cap}-1}{2}\; ,\ \ \ \varrho(A\triangle B)=\tfrac{\ell^{\triangle}+1}{2}\; ,
\end{equation*}
by~\cite[Prop.~2.4(iii)(ii)]{M-SC-II}.

\item[\rm(iii)]
Since $\{1,t\}\cap A=\{1,t\}\cap B=\{1,t\}$,
we have
\begin{equation*}
\ \ \ \varrho(A\cap B)=\tfrac{\ell^{\cap}+1}{2}\; ,\ \ \ \varrho(A\triangle B)=\tfrac{\ell^{\triangle}-1}{2}\; ,
\end{equation*}
by~\cite[Prop.~2.4(ii)(iii)]{M-SC-II}.

\item[\rm(iv)]
Since $\{1,t\}\cap A=\{1,t\}$, and $|\{1,t\}\cap B|=0$, we have
\begin{equation*}
\ \ \ \varrho(A\cap B)=\tfrac{\ell^{\cap}-1}{2}\; ,\ \ \ \varrho(A\triangle B)=\tfrac{\ell^{\triangle}+1}{2}\; ,
\end{equation*}
by~\cite[Prop.~2.4(iii)(ii)]{M-SC-II}.

\item[\rm(v)]
Since $\{1,t\}\cap A=\{1,t\}$, and $\{1,t\}\cap B=\{t\}$,
we have
\begin{equation*}
\ \ \ \varrho(A\cap B)=\tfrac{\ell^{\cap}+1}{2}\; ,\ \ \ \varrho(A\triangle B)=\tfrac{\ell^{\triangle}+1}{2}\; ,
\end{equation*}
by~\cite[Prop.~2.4(iv)(i)]{M-SC-II}.
\end{itemize}
\end{proof}

\subsection{Case: $|A\cap B|>0$ and $|A\cup B|<t$}  $\quad$
\label{sec:4}

\noindent$\bullet$ Here we consider the family
\begin{multline}
\label{eq:8}
\bigl\{(A,B)\in\mathbf{2}^{[t]}\times\mathbf{2}^{[t]}
\colon\ \ |A|=:j'<t\; ,\ \  |B|=:j''<t\; ,\\
0<|A\cap B|=:j\; ,\ \ \max\{j',j''\}<|A\cup B|<t\; ,\\
\mathfrak{q}(A-B)=\ell'\; ,\ \
\mathfrak{q}(B-A)=\ell''\; ,\ \
\mathfrak{q}(A\cap B)=\ell^{\cap}\; ,\ \
\mathfrak{q}(A\cup B)=\ell
\bigr\}
\end{multline}
of ordered two-member {\em intersecting Sperner families\/} that {\em do not cover\/} the set $E_t$. For pairs in this family, we have
\begin{equation}
\label{eq:26}
d({}_{-A}\mathrm{T}^{(+)},{}_{-B}\mathrm{T}^{(+)})=j'+j''-2j\; ,
\end{equation}
and
\begin{equation}
\label{eq:27}
\langle {}_{-A}\mathrm{T}^{(+)},{}_{-B}\mathrm{T}^{(+)}\rangle=0\ \ \ \Longleftrightarrow\ \ \ 2(j'+j''-2j)=t\; .
\end{equation}

\begin{theorem}
Ordered pairs of sets $(A,B)$ in the family~{\rm(\ref{eq:8})} can be counted with the help of products of the form
\begin{multline}
\label{eq:22}
\mathfrak{F}\left(\mathfrak{s}',\; \mathfrak{s}'';\; \varrho(E_t-(A\cup B)),\; \varrho(A-B),\; \varrho(B-A),\; \varrho(A \cap B)\right)
\\ \times \mathtt{c}\bigl(\varrho(E_t-(A\cup B));t-(j'+j''-j)\bigr)
\\ \times \mathtt{c}\bigl(\varrho(A-B);j'-j\bigr)
\cdot \mathtt{c}\bigl(\varrho(B-A);j''-j\bigr)\cdot \mathtt{c}\bigl(\varrho(A \cap B);j\bigr)\; :
\end{multline}

\begin{itemize}
\item[\rm(i)]
In the family~{\rm(\ref{eq:8})} there are
\begin{equation*}
\mathfrak{F}\left(\gamma,\theta;\tfrac{\ell+1}{2},\tfrac{\ell'-1}{2},\tfrac{\ell''-1}{2},\tfrac{\ell^{\cap}+1}{2}\right)
\cdot \mathtt{c}\bigl(\tfrac{\ell+1}{2};t-(j'+j''-j)\bigr)\cdot \mathtt{c}\bigl(\tfrac{\ell'-1}{2};j'-j\bigr)
\cdot \mathtt{c}\bigl(\tfrac{\ell''-1}{2};j''-j\bigr)\cdot \mathtt{c}\bigl(\tfrac{\ell^{\cap}+1}{2};j\bigr)
\end{equation*}
pairs $(A,B)$ of sets $A$ and $B$ such that
\begin{equation*}
\{1,t\}\cap A =\{1,t\}\cap B=\{1\}\; .
\end{equation*}
\end{itemize}

In the family~{\rm(\ref{eq:8})} there are
\begin{itemize}
\item[\rm(ii)]
\begin{equation*}
\mathfrak{F}\left(\gamma,\beta;\tfrac{\ell-1}{2},\tfrac{\ell'-1}{2},\tfrac{\ell''+1}{2},\tfrac{\ell^{\cap}+1}{2}\right)
\cdot \mathtt{c}\bigl(\tfrac{\ell-1}{2}; t-(j'+j''-j)\bigr)\cdot \mathtt{c}\bigl(\tfrac{\ell'-1}{2};j'-j\bigr)
\cdot \mathtt{c}\bigl(\tfrac{\ell''+1}{2};j''-j\bigr)\cdot \mathtt{c}\bigl(\tfrac{\ell^{\cap}+1}{2};j\bigr)
\end{equation*}
pairs $(A,B)$ such that
\begin{equation*}
\{1,t\}\cap A=\{1\}\ \ \ \text{and}\ \ \ \{1,t\}\cap B=\{1,t\}\; ;
\end{equation*}

\item[\rm(iii)]
\begin{equation*}
\mathfrak{F}\left(\alpha,\theta;\tfrac{\ell+1}{2},\tfrac{\ell'+1}{2},\tfrac{\ell''-1}{2},\tfrac{\ell^{\cap}-1}{2}\right)
\cdot \mathtt{c}\bigl(\tfrac{\ell+1}{2}; t-(j'+j''-j)\bigr)\cdot \mathtt{c}\bigl(\tfrac{\ell'+1}{2};j'-j\bigr)
\cdot \mathtt{c}\bigl(\tfrac{\ell''-1}{2};j''-j\bigr)\cdot \mathtt{c}\bigl(\tfrac{\ell^{\cap}-1}{2};j\bigr)
\end{equation*}
pairs $(A,B)$ such that
\begin{equation*}
\{1,t\}\cap A=\{1\}\ \ \ \text{and}\ \ \ |\{1,t\}\cap B|=0\; ;
\end{equation*}

\item[\rm(iv)]
\begin{equation*}
\mathfrak{F}\left(\alpha,\beta;\tfrac{\ell-1}{2},\tfrac{\ell'+1}{2},\tfrac{\ell''+1}{2},\tfrac{\ell^{\cap}-1}{2}\right)
\cdot \mathtt{c}\bigl(\tfrac{\ell-1}{2}; t-(j'+j''-j)\bigr)\cdot \mathtt{c}\bigl(\tfrac{\ell'+1}{2};j'-j\bigr)
\cdot \mathtt{c}\bigl(\tfrac{\ell''+1}{2};j''-j\bigr)\cdot \mathtt{c}\bigl(\tfrac{\ell^{\cap}-1}{2};j\bigr)
\end{equation*}
pairs $(A,B)$ such that
\begin{equation*}
\{1,t\}\cap A=\{1\}\ \ \ \text{and}\ \ \ \{1,t\}\cap B=\{t\}\; ;
\end{equation*}

\item[\rm(v)]
\begin{equation*}
\mathfrak{F}\left(\gamma,\gamma;\tfrac{\ell-1}{2},\tfrac{\ell'-1}{2},\tfrac{\ell''-1}{2},\tfrac{\ell^{\cap}+1}{2}\right)
\cdot \mathtt{c}\bigl(\tfrac{\ell-1}{2};t-(j'+j''-j)\bigr)\cdot \mathtt{c}\bigl(\tfrac{\ell'-1}{2};j'-j\bigr)
\cdot \mathtt{c}\bigl(\tfrac{\ell''-1}{2};j''-j\bigr)\cdot \mathtt{c}\bigl(\tfrac{\ell^{\cap}+1}{2};j\bigr)
\end{equation*}
pairs $(A,B)$ such that
\begin{equation*}
\{1,t\}\cap A=\{1,t\}\cap B=\{1,t\}\; ;
\end{equation*}

\item[\rm(vi)]
\begin{equation*}
\mathfrak{F}\left(\alpha,\alpha;\tfrac{\ell-1}{2},\tfrac{\ell'+1}{2},\tfrac{\ell''-1}{2},\tfrac{\ell^{\cap}-1}{2}\right)
\cdot \mathtt{c}\bigl(\tfrac{\ell-1}{2}; t-(j'+j''-j)\bigr)\cdot \mathtt{c}\bigl(\tfrac{\ell'+1}{2};j'-j\bigr)
\cdot \mathtt{c}\bigl(\tfrac{\ell''-1}{2};j''-j\bigr)\cdot \mathtt{c}\bigl(\tfrac{\ell^{\cap}-1}{2};j\bigr)
\end{equation*}
pairs $(A,B)$ such that
\begin{equation*}
\{1,t\}\cap A=\{1,t\}\ \ \ \text{and}\ \ \ |\{1,t\}\cap B|=0\; ;
\end{equation*}

\item[\rm(vii)]
\begin{equation*}
\mathfrak{F}\left(\alpha,\gamma;\tfrac{\ell-1}{2},\tfrac{\ell'+1}{2},\tfrac{\ell''-1}{2},\tfrac{\ell^{\cap}+1}{2}\right)
\cdot \mathtt{c}\bigl(\tfrac{\ell-1}{2}; t-(j'+j''-j)\bigr)\cdot \mathtt{c}\bigl(\tfrac{\ell'+1}{2};j'-j\bigr)
\cdot \mathtt{c}\bigl(\tfrac{\ell''-1}{2};j''-j\bigr)\cdot \mathtt{c}\bigl(\tfrac{\ell^{\cap}+1}{2};j\bigr)
\end{equation*}
pairs $(A,B)$ such that
\begin{equation*}
\{1,t\}\cap A=\{1,t\}\ \ \ \text{and}\ \ \ \{1,t\}\cap B=\{t\}\; ;
\end{equation*}

\item[\rm(viii)]
\begin{equation*}
\mathfrak{F}\left(\theta,\theta;\tfrac{\ell+1}{2},\tfrac{\ell'-1}{2},\tfrac{\ell''-1}{2},\tfrac{\ell^{\cap}-1}{2}\right)
\cdot \mathtt{c}\bigl(\tfrac{\ell+1}{2};t-(j'+j''-j)\bigr)\cdot \mathtt{c}\bigl(\tfrac{\ell'-1}{2};j'-j\bigr)
\cdot \mathtt{c}\bigl(\tfrac{\ell''-1}{2};j''-j\bigr)\cdot \mathtt{c}\bigl(\tfrac{\ell^{\cap}-1}{2};j\bigr)
\end{equation*}
pairs $(A,B)$ such that
\begin{equation*}
|\{1,t\}\cap A|=|\{1,t\}\cap B|=0\; ;
\end{equation*}

\item[\rm(ix)]
\begin{equation*}
\mathfrak{F}\left(\theta,\beta;\tfrac{\ell+1}{2},\tfrac{\ell'-1}{2},\tfrac{\ell''+1}{2},\tfrac{\ell^{\cap}-1}{2}\right)
\cdot \mathtt{c}\bigl(\tfrac{\ell+1}{2};t-(j'+j''-j)\bigr)\cdot \mathtt{c}\bigl(\tfrac{\ell'-1}{2};j'-j\bigr)
\cdot \mathtt{c}\bigl(\tfrac{\ell''+1}{2};j''-j\bigr)\cdot \mathtt{c}\bigl(\tfrac{\ell^{\cap}-1}{2};j\bigr)
\end{equation*}
pairs $(A,B)$ such that
\begin{equation*}
|\{1,t\}\cap A|=0\ \ \ \text{and}\ \ \ \{1,t\}\cap B=\{t\}\; ;
\end{equation*}

\item[\rm(x)]
\begin{equation*}
\mathfrak{F}\left(\theta,\gamma;\tfrac{\ell+1}{2},\tfrac{\ell'-1}{2},\tfrac{\ell''-1}{2},\tfrac{\ell^{\cap}+1}{2}\right)
\cdot \mathtt{c}\bigl(\tfrac{\ell+1}{2};t-(j'+j''-j)\bigr)\cdot \mathtt{c}\bigl(\tfrac{\ell'-1}{2};j'-j\bigr)
\cdot \mathtt{c}\bigl(\tfrac{\ell''-1}{2};j''-j\bigr)\cdot \mathtt{c}\bigl(\tfrac{\ell^{\cap}+1}{2};j\bigr)
\end{equation*}
pairs $(A,B)$ such that
\begin{equation*}
\{1,t\}\cap A=\{1,t\}\cap B=\{t\}\; .
\end{equation*}
\end{itemize}
\end{theorem}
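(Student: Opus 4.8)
The plan is to repeat, with one more letter, the argument that proved the preceding theorem for the family~(\ref{eq:7}). Here the hypotheses $0<|A\cap B|$ and $\max\{j',j''\}<|A\cup B|<t$ guarantee that the \emph{four} blocks $E_t-(A\cup B)$, $A-B$, $B-A$ and $A\cap B$ are all nonempty and partition $E_t$, so ternary Smirnov words get replaced by Smirnov words over the four-letter alphabet $(\theta,\alpha,\beta,\gamma)$.

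First I would fix a pair $(A,B)$ in the family~(\ref{eq:8}) and look at the interval decompositions of its four blocks. Since these blocks partition $E_t$, their intervals tile $[1,t]$ with no gaps, and two intervals abutting along $E_t$ lie in distinct blocks (otherwise they would coalesce into a single interval). Picking an \emph{ascending system of distinct representatives} $e_1<e_2<\ldots<e_m$ of all these intervals, where $m=\varrho(E_t-(A\cup B))+\varrho(A-B)+\varrho(B-A)+\varrho(A\cap B)$, and applying
\begin{equation*}
e_i\longmapsto
\begin{cases}
\theta\; , & \text{if $e_i\in E_t-(A\cup B)$},\\
\alpha\; , & \text{if $e_i\in A-B$},\\
\beta\; , & \text{if $e_i\in B-A$},\\
\gamma\; , & \text{if $e_i\in A\cap B$},
\end{cases}
\end{equation*}
produces a Smirnov word over $(\theta,\alpha,\beta,\gamma)$ with Parikh vector $\bigl(\varrho(E_t-(A\cup B)),\varrho(A-B),\varrho(B-A),\varrho(A\cap B)\bigr)$ whose first letter is the block label of the element $1$ and whose last letter is the block label of the element $t$.

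Next I would reverse the construction. Fixing such a Smirnov word $w$ together with the cardinalities $j'=|A|$, $j''=|B|$, $j=|A\cap B|$, I recover the pairs $(A,B)$ mapping to $w$ by prescribing the sizes of the successive intervals: one writes $j=|A\cap B|$ as an ordered sum of positive parts indexed by the $\gamma$-intervals, in $\mathtt{c}(\varrho(A\cap B);j)$ ways, and similarly distributes $j'-j=|A-B|$ over the $\alpha$-intervals, $j''-j=|B-A|$ over the $\beta$-intervals, and $t-(j'+j''-j)=|E_t-(A\cup B)|$ over the $\theta$-intervals. Because the four blocks exhaust $E_t$, these choices determine $A$ and $B$ uniquely, and each pair arises from exactly one such word-plus-size datum; hence the number of pairs realizing $w$ is the product of the four factors $\mathtt{c}(\cdot;\cdot)$ appearing in~(\ref{eq:22}), and summing over all Smirnov words with the prescribed first and last letters supplies the factor $\mathfrak{F}(\mathfrak{s}',\mathfrak{s}'';\ldots)$.

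Finally, for each of the ten sub-cases (i)--(x) I would read off $\mathfrak{s}'$ and $\mathfrak{s}''$ as the blocks containing $1$ and $t$, directly from the prescribed values of $\{1,t\}\cap A$ and $\{1,t\}\cap B$, and convert each $\varrho(S)$ into $\tfrac12(\mathfrak{q}(S)\pm1)$ by~\cite[Prop.~2.4]{M-SC-II}: one has $\varrho(S)=\tfrac12(\mathfrak{q}(S)+1)$ when $S\cap\{1,t\}$ equals $\{1\}$, $\{t\}$ or $\{1,t\}$, and $\varrho(S)=\tfrac12(\mathfrak{q}(S)-1)$ when $S\cap\{1,t\}=\emptyset$, all of which are integers since $\ell',\ell'',\ell^{\cap},\ell$ are odd. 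For the block $E_t-(A\cup B)$ one also invokes $\mathfrak{q}(E_t-(A\cup B))=\mathfrak{q}(A\cup B)=\ell$, which holds because ${}_{-(E_t-(A\cup B))}\mathrm{T}^{(+)}=-\,{}_{-(A\cup B)}\mathrm{T}^{(+)}$ and negation permutes the vertex sequence $\mathrm{V}(\boldsymbol{R})$. For instance, in case~(i), where $1\in A\cap B$ and $t\in E_t-(A\cup B)$, this gives $\mathfrak{s}'=\gamma$, $\mathfrak{s}''=\theta$, $\varrho(A\cap B)=\tfrac{\ell^{\cap}+1}{2}$, $\varrho(A-B)=\tfrac{\ell'-1}{2}$, $\varrho(B-A)=\tfrac{\ell''-1}{2}$ and $\varrho(E_t-(A\cup B))=\tfrac{\ell+1}{2}$ — precisely the asserted product; cases (ii)--(x), along with the six further sub-cases obtained by exchanging $A$ and $B$, follow verbatim. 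I do not expect a genuine obstacle here: the only step demanding attention is this last piece of bookkeeping — keeping the four block-to-letter assignments and the four sign choices consistent through all the cases — exactly as in the proof for the family~(\ref{eq:7}).
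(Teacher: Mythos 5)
Your proposal is correct and follows essentially the same route as the paper: encode the interval tiling of $E_t$ by the four blocks as a Smirnov word over $(\theta,\alpha,\beta,\gamma)$ with first/last letters determined by the blocks containing $1$ and $t$, count the pairs realizing each word by compositions of the block sizes, and convert each $\varrho$ into $\tfrac{1}{2}(\mathfrak{q}\pm 1)$ via \cite[Prop.~2.4]{M-SC-II}. The only (cosmetic) difference is that you obtain $\varrho\bigl(E_t-(A\cup B)\bigr)$ from $\mathfrak{q}\bigl(E_t-(A\cup B)\bigr)=\mathfrak{q}(A\cup B)=\ell$ together with the complement's own intersection with $\{1,t\}$, whereas the paper computes $\varrho(A\cup B)$ first and then adjusts by $0$ or $\pm 1$; both yield identical values in all ten cases.
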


\begin{proof}
For each pair~$(A,B)$ in the family~(\ref{eq:8}), pick an {\em ascending system\/} of {\em distinct
representatives\/}~$(e_1<e_2<\ldots<e_{\varrho(E_t-(A\cup B))+\varrho(A-B)+\varrho(B-A)+\varrho(A\cap B)})$ \mbox{$\subseteq E_t$} of the intervals composing the sets~$(E_t-(A\cup B))$, $(A-B)$, $(B-A)$ and~\mbox{$(A\cap B)$}. By making the substitutions
\begin{multline*}
e_i\mapsto
\begin{cases}
\theta\; , & \text{if $e_i\in E_t-(A\cup B)$},\\
\alpha\; , & \text{if $e_i\in A-B$},\\
\beta\; , & \text{if $e_i\in B-A$},\\
\gamma\; , & \text{if $e_i\in A\cap B$},
\end{cases}
\\ 1\leq i\leq\varrho(E_t-(A\cup B))+\varrho(A-B)+\varrho(B-A)+\varrho(A\cap B)\; ,
\end{multline*}
we obtain some Smirnov words over the four-letter alphabet~$(\theta,\alpha,\beta,\gamma)$. Given any such Smirnov word, we find the number of pairs~$(A,B)$ in the corresponding subfamily of the family~{\rm(\ref{eq:8})} by means of a product of the form~(\ref{eq:22}).

\noindent(i) Since $\{1,t\}\cap A =\{1,t\}\cap B=\{1\}$,
we have
\begin{equation*}
\ \ \ \varrho(A-B)=\tfrac{\ell'-1}{2}\; ,\ \ \ \varrho(B-A)=\tfrac{\ell''-1}{2}\; ,
\end{equation*}
by \cite[Prop.~2.4(iii)]{M-SC-II}, and
\begin{equation*}
\varrho(A\cap B)=\tfrac{\ell^{\cap}+1}{2}\; ,\ \ \ \varrho(A\cup B)=\tfrac{\ell+1}{2}\; ,
\end{equation*}
by \cite[Prop.~2.4(i)]{M-SC-II}. Note that $\varrho(E_t-(A\cup B))=\varrho(A\cup B)$, that is,
\begin{equation*}
\varrho\bigl(E_t-(A\cup B)\bigr)=\tfrac{\ell+1}{2}\; .
\end{equation*}

\noindent(ii) Since $\{1,t\}\cap A=\{1\}$, and $\{1,t\}\cap B=\{1,t\}$,
we have
\begin{equation*}
\ \ \ \varrho(A-B)=\tfrac{\ell'-1}{2}\; ,\ \ \ \varrho(B-A)=\tfrac{\ell''+1}{2}\; ,
\end{equation*}
by \cite[Prop.~2.4(iii)(iv)]{M-SC-II}, and
\begin{equation*}
\varrho(A\cap B)=\tfrac{\ell^{\cap}+1}{2}\; ,\ \ \ \varrho(A\cup B)=\tfrac{\ell+1}{2}\; ,
\end{equation*}
by \cite[Prop.~2.4(i)(ii)]{M-SC-II}. Note that $\varrho(E_t-(A\cup B))=\varrho(A\cup B)-1$, that is,
\begin{equation*}
\varrho\bigl(E_t-(A\cup B)\bigr)=\tfrac{\ell-1}{2}\; .
\end{equation*}

\noindent(iii) Since $\{1,t\}\cap A=\{1\}$, and $|\{1,t\}\cap B|=0$,
we have
\begin{equation*}
\ \ \ \varrho(A-B)=\tfrac{\ell'+1}{2}\; ,\ \ \ \varrho(B-A)=\tfrac{\ell''-1}{2}\; ,
\end{equation*}
by \cite[Prop.~2.4(i)(iii)]{M-SC-II}, and
\begin{equation*}
\varrho(A\cap B)=\tfrac{\ell^{\cap}-1}{2}\; ,\ \ \ \varrho(A\cup B)=\tfrac{\ell+1}{2}\; ,
\end{equation*}
by \cite[Prop.~2.4(iii)(i)]{M-SC-II}. Note that
\begin{equation*}
\varrho\bigl(E_t-(A\cup B)\bigr)=\tfrac{\ell+1}{2}\; .
\end{equation*}

\noindent(iv) Since $\{1,t\}\cap A=\{1\}$, and $\{1,t\}\cap B=\{t\}$,
we have
\begin{equation*}
\ \ \ \varrho(A-B)=\tfrac{\ell'+1}{2}\; ,\ \ \ \varrho(B-A)=\tfrac{\ell''+1}{2}\; ,
\end{equation*}
by \cite[Prop.~2.4(i)(iv)]{M-SC-II}, and
\begin{equation*}
\varrho(A\cap B)=\tfrac{\ell^{\cap}-1}{2}\; ,\ \ \ \varrho(A\cup B)=\tfrac{\ell+1}{2}\; ,
\end{equation*}
by \cite[Prop.~2.4(iii)(ii)]{M-SC-II}. Note that
\begin{equation*}
\varrho\bigl(E_t-(A\cup B)\bigr)=\tfrac{\ell-1}{2}\; .
\end{equation*}

\noindent(v) Since $\{1,t\}\cap A=\{1,t\}\cap B=\{1,t\}$,
we have
\begin{equation*}
\ \ \ \varrho(A-B)=\tfrac{\ell'-1}{2}\; ,\ \ \ \varrho(B-A)=\tfrac{\ell''-1}{2}\; ,
\end{equation*}
by \cite[Prop.~2.4(iii)]{M-SC-II}, and
\begin{equation*}
\varrho(A\cap B)=\tfrac{\ell^{\cap}+1}{2}\; ,\ \ \ \varrho(A\cup B)=\tfrac{\ell+1}{2}\; ,
\end{equation*}
by \cite[Prop.~2.4(ii)]{M-SC-II}. Note that
\begin{equation*}
\varrho\bigl(E_t-(A\cup B)\bigr)=\tfrac{\ell-1}{2}\; .
\end{equation*}

\noindent(vi) Since $\{1,t\}\cap A=\{1,t\}$, and $|\{1,t\}\cap B|=0$,
we have
\begin{equation*}
\ \ \ \varrho(A-B)=\tfrac{\ell'+1}{2}\; ,\ \ \ \varrho(B-A)=\tfrac{\ell''-1}{2}\; .
\end{equation*}
by \cite[Prop.~2.4(ii)(iii)]{M-SC-II}, and
\begin{equation*}
\varrho(A\cap B)=\tfrac{\ell^{\cap}-1}{2}\; ,\ \ \ \varrho(A\cup B)=\tfrac{\ell+1}{2}\; ,
\end{equation*}
by \cite[Prop.~2.4(iii)(ii)]{M-SC-II}. Note that
\begin{equation*}
\varrho\bigl(E_t-(A\cup B)\bigr)=\tfrac{\ell-1}{2}\; .
\end{equation*}

\noindent(vii) Since $\{1,t\}\cap A=\{1,t\}$, and $\{1,t\}\cap B=\{t\}$,
we have
\begin{equation*}
\ \ \ \varrho(A-B)=\tfrac{\ell'+1}{2}\; ,\ \ \ \varrho(B-A)=\tfrac{\ell''-1}{2}\; ,
\end{equation*}
by \cite[Prop.~2.4(i)(iii)]{M-SC-II}, and
\begin{equation*}
\varrho(A\cap B)=\tfrac{\ell^{\cap}+1}{2}\; ,\ \ \ \varrho(A\cup B)=\tfrac{\ell+1}{2}\; ,
\end{equation*}
by \cite[Prop.~2.4(iv)(ii)]{M-SC-II}. Note that
\begin{equation*}
\varrho\bigl(E_t-(A\cup B)\bigr)=\tfrac{\ell-1}{2}\; .
\end{equation*}

\noindent(viii) Since $|\{1,t\}\cap A|=|\{1,t\}\cap B|=0$,
we have
\begin{equation*}
\ \ \ \varrho(A-B)=\tfrac{\ell'-1}{2}\; ,\ \ \ \varrho(B-A)=\tfrac{\ell''-1}{2}\; .
\end{equation*}
and
\begin{equation*}
\varrho(A\cap B)=\tfrac{\ell^{\cap}-1}{2}\; ,\ \ \ \varrho(A\cup B)=\tfrac{\ell-1}{2}\; ,
\end{equation*}
by \cite[Prop.~2.4(iii)]{M-SC-II}. Note that $\varrho(E_t-(A\cup B))=\varrho(A\cup B)+1$, that is,
\begin{equation*}
\varrho\bigl(E_t-(A\cup B)\bigr)=\tfrac{\ell+1}{2}\; .
\end{equation*}

\noindent(ix) Since $|\{1,t\}\cap A|=0$, and $\{1,t\}\cap B=\{t\}$,
we have
\begin{equation*}
\ \ \ \varrho(A-B)=\tfrac{\ell'-1}{2}\; ,\ \ \ \varrho(B-A)=\tfrac{\ell''+1}{2}\; ,
\end{equation*}
and
\begin{equation*}
\varrho(A\cap B)=\tfrac{\ell^{\cap}-1}{2}\; ,\ \ \ \varrho(A\cup B)=\tfrac{\ell+1}{2}\; ,
\end{equation*}
by \cite[Prop.~2.4(iii)(iv)]{M-SC-II}. Note that $\varrho(E_t-(A\cup B))=\varrho(A\cup B)$, that is,
\begin{equation*}
\varrho\bigl(E_t-(A\cup B)\bigr)=\tfrac{\ell+1}{2}\; .
\end{equation*}

\noindent(x) Since $\{1,t\}\cap A=\{1,t\}\cap B=\{t\}$, we have
\begin{equation*}
\ \ \ \varrho(A-B)=\tfrac{\ell'-1}{2}\; ,\ \ \ \varrho(B-A)=\tfrac{\ell''-1}{2}\; ,
\end{equation*}
by \cite[Prop.~2.4(iii)]{M-SC-II}, and
\begin{equation*}
\varrho(A\cap B)=\tfrac{\ell^{\cap}+1}{2}\; ,\ \ \ \varrho(A\cup B)=\tfrac{\ell+1}{2}\; ,
\end{equation*}
by \cite[Prop.~2.4(iv)]{M-SC-II}. Note that
\begin{equation*}
\varrho\bigl(E_t-(A\cup B)\bigr)=\tfrac{\ell+1}{2}\; .
\end{equation*}
\end{proof}

\noindent$\bullet$ Let us consider the family
\begin{multline}
\label{eq:10}
\bigl\{(A,B)\in\mathbf{2}^{[t]}\times\mathbf{2}^{[t]}
\colon\ \ |A|=:j'<t\; ,\ \  |B|=:j''<t\; ,\\
0<|A\cap B|=:j\; ,\ \ \max\{j',j''\}<|A\cup B|<t\; ,\\
\mathfrak{q}(A\triangle B)=\ell^{\triangle}\; ,\ \
\mathfrak{q}(A\cap B)=\ell^{\cap}\; ,\ \
\mathfrak{q}(A\cup B)=\ell
\bigr\}
\end{multline}
of ordered two-member {\em intersecting Sperner families\/} that {\em do not cover\/} the set $E_t$, with the properties~(\ref{eq:26})(\ref{eq:27}).
\begin{theorem}
Ordered pairs of sets $(A,B)$ in the family~{\rm(\ref{eq:10})} can be counted with the help of products that include subproducts of the form
\begin{multline}
\label{eq:23}
\mathfrak{T}\left(\mathfrak{s}',\; \mathfrak{s}'';\; \varrho(E_t-(A\cup B)),\; \varrho(A\triangle B),\; \varrho(A \cap B)\right)
\\ \times \mathtt{c}\bigl(\varrho(E_t-(A\cup B));t-(j'+j''-j)\bigr)
\\ \times \mathtt{c}\bigl(\varrho(A\triangle B);j'+j''-2j\bigr)\cdot \mathtt{c}\bigl(\varrho(A \cap B);j\bigr)\; :
\end{multline}

\begin{itemize}
\item[\rm(i)]
In the family~{\rm(\ref{eq:10})} there are
\begin{multline*}
\mathfrak{T}\left(\beta,\theta;\tfrac{\ell+1}{2},\tfrac{\ell^{\triangle}-1}{2},\tfrac{\ell^{\cap}+1}{2}\right)\cdot
\mathtt{c}\bigl(\tfrac{\ell+1}{2};t-(j'+j''-j)\bigr)
\cdot\mathtt{c}\bigl(\tfrac{\ell^{\triangle}-1}{2};j'+j''-2j\bigr)\cdot \mathtt{c}\bigl(\tfrac{\ell^{\cap}+1}{2};j\bigr)
\\ \times\tbinom{j'+j''-2j}{j'-j}
\end{multline*}
pairs $(A,B)$ of sets $A$ and $B$ such that
\begin{equation*}
\{1,t\}\cap A =\{1,t\}\cap B=\{1\}\; .
\end{equation*}
\end{itemize}

In the family~{\rm(\ref{eq:10})} there are
\begin{itemize}
\item[\rm(ii)]
\begin{multline*}
\mathfrak{T}\left(\beta,\alpha;\tfrac{\ell-1}{2},\tfrac{\ell^{\triangle}+1}{2},\tfrac{\ell^{\cap}+1}{2}\right)\cdot
\mathtt{c}\bigl(\tfrac{\ell-1}{2};t-(j'+j''-j)\bigr)
\cdot\mathtt{c}\bigl(\tfrac{\ell^{\triangle}+1}{2};j'+j''-2j\bigr)\cdot \mathtt{c}\bigl(\tfrac{\ell^{\cap}+1}{2};j\bigr)
\\ \times\tbinom{j'+j''-2j-1}{j'-j}
\end{multline*}
pairs $(A,B)$ such that
\begin{equation*}
\{1,t\}\cap A=\{1\}\ \ \ \text{and}\ \ \ \{1,t\}\cap B=\{1,t\}\; ;
\end{equation*}

\item[\rm(iii)]
\begin{multline*}
\mathfrak{T}\left(\alpha,\theta;\tfrac{\ell+1}{2},\tfrac{\ell^{\triangle}+1}{2},\tfrac{\ell^{\cap}-1}{2}\right)\cdot
\mathtt{c}\bigl(\tfrac{\ell+1}{2};t-(j'+j''-j)\bigr)
\cdot\mathtt{c}\bigl(\tfrac{\ell^{\triangle}+1}{2};j'+j''-2j\bigr)\cdot \mathtt{c}\bigl(\tfrac{\ell^{\cap}-1}{2};j\bigr)
\\ \times\tbinom{j'+j''-2j-1}{j''-j}
\end{multline*}
pairs $(A,B)$ such that
\begin{equation*}
\{1,t\}\cap A=\{1\}\ \ \ \text{and}\ \ \ |\{1,t\}\cap B|=0\; ;
\end{equation*}

\item[\rm(iv)]
\begin{multline*}
\mathfrak{T}\left(\alpha,\alpha;\tfrac{\ell-1}{2},\tfrac{\ell^{\triangle}+1}{2},\tfrac{\ell^{\cap}-1}{2}\right)\cdot
\mathtt{c}\bigl(\tfrac{\ell-1}{2};t-(j'+j''-j)\bigr)
\cdot\mathtt{c}\bigl(\tfrac{\ell^{\triangle}+1}{2};j'+j''-2j\bigr)\cdot \mathtt{c}\bigl(\tfrac{\ell^{\cap}-1}{2};j\bigr)
\\ \times\tbinom{j'+j''-2j-2}{j'-j-1}
\end{multline*}
pairs $(A,B)$ such that
\begin{equation*}
\{1,t\}\cap A=\{1\}\ \ \ \text{and}\ \ \ \{1,t\}\cap B=\{t\}\; ;
\end{equation*}

\item[\rm(v)]
\begin{multline*}
\mathfrak{T}\left(\beta,\beta;\tfrac{\ell-1}{2},\tfrac{\ell^{\triangle}-1}{2},\tfrac{\ell^{\cap}+1}{2}\right)\cdot
\mathtt{c}\bigl(\tfrac{\ell-1}{2};t-(j'+j''-j)\bigr)
\cdot\mathtt{c}\bigl(\tfrac{\ell^{\triangle}-1}{2};j'+j''-2j\bigr)\cdot \mathtt{c}\bigl(\tfrac{\ell^{\cap}+1}{2};j\bigr)
\\ \times\tbinom{j'+j''-2j}{j'-j}
\end{multline*}
pairs $(A,B)$ such that
\begin{equation*}
\{1,t\}\cap A=\{1,t\}\cap B=\{1,t\}\; ;
\end{equation*}

\item[\rm(vi)]
\begin{multline*}
\mathfrak{T}\left(\alpha,\alpha;\tfrac{\ell-1}{2},\tfrac{\ell^{\triangle}+1}{2},\tfrac{\ell^{\cap}-1}{2}\right)\cdot
\mathtt{c}\bigl(\tfrac{\ell-1}{2};t-(j'+j''-j)\bigr)
\cdot\mathtt{c}\bigl(\tfrac{\ell^{\triangle}+1}{2};j'+j''-2j\bigr)\cdot \mathtt{c}\bigl(\tfrac{\ell^{\cap}-1}{2};j\bigr)
\\ \times\tbinom{j'+j''-2j-2}{j''-j}
\end{multline*}
pairs $(A,B)$ such that
\begin{equation*}
\{1,t\}\cap A=\{1,t\}\ \ \ \text{and}\ \ \ |\{1,t\}\cap B|=0\; ;
\end{equation*}

\item[\rm(vii)]
\begin{multline*}
\mathfrak{T}\left(\alpha,\beta;\tfrac{\ell-1}{2},\tfrac{\ell^{\triangle}+1}{2},\tfrac{\ell^{\cap}+1}{2}\right)\cdot
\mathtt{c}\bigl(\tfrac{\ell-1}{2};t-(j'+j''-j)\bigr)
\cdot\mathtt{c}\bigl(\tfrac{\ell^{\triangle}+1}{2};j'+j''-2j\bigr)\cdot \mathtt{c}\bigl(\tfrac{\ell^{\cap}+1}{2};j\bigr)
\\ \times\tbinom{j'+j''-2j-1}{j''-j}
\end{multline*}
pairs $(A,B)$ such that
\begin{equation*}
\{1,t\}\cap A=\{1,t\}\ \ \ \text{and}\ \ \ \{1,t\}\cap B=\{t\}\; ;
\end{equation*}

\item[\rm(viii)]
\begin{multline*}
\mathfrak{T}\left(\theta,\theta;\tfrac{\ell+1}{2},\tfrac{\ell^{\triangle}-1}{2},\tfrac{\ell^{\cap}-1}{2}\right)\cdot
\mathtt{c}\bigl(\tfrac{\ell+1}{2};t-(j'+j''-j)\bigr)
\cdot\mathtt{c}\bigl(\tfrac{\ell^{\triangle}-1}{2};j'+j''-2j\bigr)\cdot \mathtt{c}\bigl(\tfrac{\ell^{\cap}-1}{2};j\bigr)
\\ \times\tbinom{j'+j''-2j}{j'-j}
\end{multline*}
pairs $(A,B)$ such that
\begin{equation*}
|\{1,t\}\cap A|=|\{1,t\}\cap B|=0\; ;
\end{equation*}

\item[\rm(ix)]
\begin{multline*}
\mathfrak{T}\left(\theta,\alpha;\tfrac{\ell+1}{2},\tfrac{\ell^{\triangle}+1}{2},\tfrac{\ell^{\cap}-1}{2}\right)\cdot
\mathtt{c}\bigl(\tfrac{\ell+1}{2};t-(j'+j''-j)\bigr)
\cdot\mathtt{c}\bigl(\tfrac{\ell^{\triangle}+1}{2};j'+j''-2j\bigr)\cdot \mathtt{c}\bigl(\tfrac{\ell^{\cap}-1}{2};j\bigr)
\\ \times\tbinom{j'+j''-2j-1}{j'-j}
\end{multline*}
pairs $(A,B)$ such that
\begin{equation*}
|\{1,t\}\cap A|=0\ \ \ \text{and}\ \ \ \{1,t\}\cap B=\{t\}\; ;
\end{equation*}

\item[\rm(x)]
\begin{multline*}
\mathfrak{T}\left(\theta,\beta;\tfrac{\ell+1}{2},\tfrac{\ell^{\triangle}-1}{2},\tfrac{\ell^{\cap}+1}{2}\right)\cdot
\mathtt{c}\bigl(\tfrac{\ell+1}{2};t-(j'+j''-j)\bigr)
\cdot\mathtt{c}\bigl(\tfrac{\ell^{\triangle}-1}{2};j'+j''-2j\bigr)\cdot \mathtt{c}\bigl(\tfrac{\ell^{\cap}+1}{2};j\bigr)
\\ \times\tbinom{j'+j''-2j}{j'-j}
\end{multline*}
pairs $(A,B)$ such that
\begin{equation*}
\{1,t\}\cap A=\{1,t\}\cap B=\{t\}\; .
\end{equation*}
\end{itemize}
\end{theorem}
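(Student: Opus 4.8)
The plan is to run exactly the argument that proves the theorem for family~(\ref{eq:8}), but with the \emph{three-block} partition of $E_t$ into $E_t-(A\cup B)$, $A\triangle B$ and $A\cap B$ replacing the four-block one; accordingly, ternary Smirnov words over $(\theta,\alpha,\beta)$ take the place of the quaternary ones, which is why the statement is phrased through $\mathfrak{T}$ rather than $\mathfrak{F}$.

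First, for each pair $(A,B)$ in family~(\ref{eq:10}) I would choose an ascending system of distinct representatives $e_1<e_2<\cdots<e_N$, with $N=\varrho(E_t-(A\cup B))+\varrho(A\triangle B)+\varrho(A\cap B)$, one drawn from each maximal interval of each of the three blocks. Substituting $e_i\mapsto\theta$, $\alpha$, $\beta$ according as $e_i$ lies in $E_t-(A\cup B)$, in $A\triangle B$, or in $A\cap B$, and reading off the letters in order, yields a ternary Smirnov word over $(\theta,\alpha,\beta)$ --- Smirnov because consecutive intervals of a partition of $[t]$ are adjacent in $[t]$ and so must be of distinct types. Its Parikh vector is $\bigl(\varrho(E_t-(A\cup B)),\varrho(A\triangle B),\varrho(A\cap B)\bigr)$, and its first and last letters record which block contains the coordinate $1$ and which contains the coordinate $t$. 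Conversely, for a fixed such Smirnov word, the pairs $(A,B)$ it comes from are recovered by replacing each letter by a block of consecutive integers, the lengths of the blocks of a given type forming a composition of the prescribed total cardinality for that type --- namely $t-(j'+j''-j)$ for $\theta$, $j'+j''-2j$ for $\alpha$, and $j$ for $\beta$ --- so that the number of pairs in the corresponding subfamily is obtained as a product of the shape~(\ref{eq:23}). This reduces the theorem to determining, in each of the ten boundary regimes, the Parikh vector and the two extreme letters of the word.

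The substance of the proof is then the ten-case verification (i)--(x). Fixing $\{1,t\}\cap A$ and $\{1,t\}\cap B$ determines whether $1$, respectively $t$, falls into $E_t-(A\cup B)$, into $A\triangle B$, or into $A\cap B$; this fixes the extreme letters, and it also selects, for each of $A\cap B$, $A\triangle B$ and $A\cup B$, the applicable clause among (i)--(iv) of~\cite[Prop.~2.4]{M-SC-II}, thereby turning the prescribed $\mathfrak{q}$-values $\ell^{\cap}$, $\ell^{\triangle}$, $\ell$ into the interval numbers $\varrho(A\cap B)$, $\varrho(A\triangle B)$, $\varrho(A\cup B)$ with the characteristic $\pm1$ shifts. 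The remaining entry $\varrho(E_t-(A\cup B))$ is then obtained from $\varrho(A\cup B)$ by the complementation bookkeeping --- $\varrho$ of the complement of a set $S$ equals $\varrho(S)-1$ augmented by one for each of the endpoints $1,t$ not lying in $S$ --- which is precisely what produces the $\tfrac{\ell+1}{2}$ versus $\tfrac{\ell-1}{2}$ dichotomy in the first argument of $\mathfrak{T}$. Substituting these data into~(\ref{eq:23}) for each regime yields the ten displayed formulas, exactly as in the proofs for families~(\ref{eq:8}) and~(\ref{eq:11}) (Theorem~\ref{th:3}).

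I expect no conceptual obstacle: every ingredient --- interval arithmetic for subsets of $[t]$, the $\mathfrak{q}\leftrightarrow\varrho$ dictionary of~\cite[Prop.~2.4]{M-SC-II}, the counting of ternary Smirnov words with a given Parikh vector and prescribed first and last letters, and the composition numbers $\mathtt{c}(m;n)=\tbinom{n-1}{m-1}$ --- is already in place in~\cite{M-SC-II} and~\cite{M-SC-III}. The one thing that demands care, and the likeliest source of slips, is keeping the three pieces of bookkeeping consistent across all ten regimes simultaneously: the placement of the coordinates $1$ and $t$ among the three blocks, the attendant choice of clause of~\cite[Prop.~2.4]{M-SC-II} for each of $A\cap B$, $A\triangle B$, $A\cup B$, and the sign of the shift incurred in passing from $A\cup B$ to its complement.
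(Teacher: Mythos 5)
Your proposal follows essentially the same route as the paper's own proof: the ascending system of distinct representatives for the three blocks $E_t-(A\cup B)$, $A\triangle B$, $A\cap B$, the induced ternary Smirnov word whose extreme letters record where $1$ and $t$ fall, the $\mathfrak{q}\leftrightarrow\varrho$ dictionary from \cite[Prop.~2.4]{M-SC-II}, and the complementation rule $\varrho(E_t-S)=\varrho(S)-1+[1\notin S]+[t\notin S]$ giving the $\tfrac{\ell\pm1}{2}$ dichotomy. The bookkeeping rules you state are correct and reproduce the ten displayed cases.
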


\begin{proof}
For each pair $(A,B)$ in the family~{\rm(\ref{eq:10})}, pick an {\em ascending system\/} of {\em distinct
representatives}~$(e_1<e_2<\ldots<e_{\varrho(E_t-(A\cup B))+\varrho(A\triangle B)+\varrho(A\cap B)})\subseteq E_t$ of the
intervals composing the
sets~$(E_t-(A\cup B))$, $(A\triangle B)$ and~$(A\cap B)$. By making the substitutions
\begin{equation*}
e_i\mapsto
\begin{cases}
\theta\; , & \text{if $e_i\in E_t-(A\cup B)$},\\
\alpha\; , & \text{if $e_i\in A\triangle B$},\\
\beta\; , & \text{if $e_i\in A\cap B$},
\end{cases}
\ \ \ \ \ 1\leq i\leq\varrho(E_t-(A\cup B))+\varrho(A\triangle B)+\varrho(A\cap B)\; ,
\end{equation*}
we obtain some ternary Smirnov words over the alphabet~$(\theta,\alpha,\beta)$. Given any such Smirnov word, we find the number of pairs~$(A,B)$ in the corresponding subfamily of the family~{\rm(\ref{eq:10})} by means of a product of the form~(\ref{eq:23}).

\noindent(i) Since $\{1,t\}\cap A =\{1,t\}\cap B=\{1\}$,
we have
\begin{equation*}
\ \ \ \varrho(A\triangle B)=\tfrac{\ell^{\triangle}-1}{2}\; ,
\end{equation*}
by \cite[Prop.~2.4(iii)]{M-SC-II}, and
\begin{equation*}
\varrho(A\cap B)=\tfrac{\ell^{\cap}+1}{2}\; ,\ \ \ \varrho(A\cup B)=\tfrac{\ell+1}{2}\; ,
\end{equation*}
by \cite[Prop.~2.4(i)]{M-SC-II}. Note that $\varrho(E_t-(A\cup B))=\varrho(A\cup B)$, that is,
\begin{equation*}
\varrho\bigl(E_t-(A\cup B)\bigr)=\tfrac{\ell+1}{2}\; .
\end{equation*}

\noindent(ii) Since $\{1,t\}\cap A=\{1\}$, and $\{1,t\}\cap B=\{1,t\}$, we have
\begin{equation*}
\ \ \ \varrho(A\triangle B)=\tfrac{\ell^{\triangle}+1}{2}\; ,
\end{equation*}
by \cite[Prop.~2.4(iv)]{M-SC-II}, and
\begin{equation*}
\varrho(A\cap B)=\tfrac{\ell^{\cap}+1}{2}\; ,\ \ \ \varrho(A\cup B)=\tfrac{\ell+1}{2}\; ,
\end{equation*}
by \cite[Prop.~2.4(i)(ii)]{M-SC-II}. Note that $\varrho(E_t-(A\cup B))=\varrho(A\cup B)-1$, that is,
\begin{equation*}
\varrho\bigl(E_t-(A\cup B)\bigr)=\tfrac{\ell-1}{2}\; .
\end{equation*}

\noindent(iii) Since $\{1,t\}\cap A=\{1\}$, and $|\{1,t\}\cap B|=0$,
we have
\begin{equation*}
\ \ \ \varrho(A\triangle B)=\tfrac{\ell^{\triangle}+1}{2}\; ,
\end{equation*}
by \cite[Prop.~2.4(i)]{M-SC-II}, and
\begin{equation*}
\varrho(A\cap B)=\tfrac{\ell^{\cap}-1}{2}\; ,\ \ \ \varrho(A\cup B)=\tfrac{\ell+1}{2}\; ,
\end{equation*}
by \cite[Prop.~2.4(iii)(i)]{M-SC-II}. Note that
\begin{equation*}
\varrho\bigl(E_t-(A\cup B)\bigr)=\tfrac{\ell+1}{2}\; .
\end{equation*}

\noindent(iv) Since $\{1,t\}\cap A=\{1\}$, and $\{1,t\}\cap B=\{t\}$, we have
\begin{equation*}
\ \ \ \varrho(A\triangle B)=\tfrac{\ell^{\triangle}+1}{2}\; ,
\end{equation*}
by \cite[Prop.~2.4(ii)]{M-SC-II}, and
\begin{equation*}
\varrho(A\cap B)=\tfrac{\ell^{\cap}-1}{2}\; ,\ \ \ \varrho(A\cup B)=\tfrac{\ell+1}{2}\; ,
\end{equation*}
by \cite[Prop.~2.4(iii)(ii)]{M-SC-II}. Note that
\begin{equation*}
\varrho\bigl(E_t-(A\cup B)\bigr)=\tfrac{\ell-1}{2}\; .
\end{equation*}

\noindent(v) Since $\{1,t\}\cap A=\{1,t\}\cap B=\{1,t\}$,
we have
\begin{equation*}
\ \ \ \varrho(A\triangle B)=\tfrac{\ell^{\triangle}-1}{2}\; ,
\end{equation*}
by \cite[Prop.~2.4(iii)]{M-SC-II}, and
\begin{equation*}
\varrho(A\cap B)=\tfrac{\ell^{\cap}+1}{2}\; ,\ \ \ \varrho(A\cup B)=\tfrac{\ell+1}{2}\; ,
\end{equation*}
by \cite[Prop.~2.4(ii)]{M-SC-II}. Note that
\begin{equation*}
\varrho\bigl(E_t-(A\cup B)\bigr)=\tfrac{\ell-1}{2}\; .
\end{equation*}

\noindent(vi) Since $\{1,t\}\cap A=\{1,t\}$, and $|\{1,t\}\cap B|=0$,
we have
\begin{equation*}
\ \ \ \varrho(A\triangle B)=\tfrac{\ell^{\triangle}+1}{2}\; ,
\end{equation*}
by \cite[Prop.~2.4(ii)]{M-SC-II}, and
\begin{equation*}
\varrho(A\cap B)=\tfrac{\ell^{\cap}-1}{2}\; ,\ \ \ \varrho(A\cup B)=\tfrac{\ell+1}{2}\; ,
\end{equation*}
by \cite[Prop.~2.4(iii)(ii)]{M-SC-II}. Note that
\begin{equation*}
\varrho\bigl(E_t-(A\cup B)\bigr)=\tfrac{\ell-1}{2}\; .
\end{equation*}

\noindent(vii) Since $\{1,t\}\cap A=\{1,t\}$, and $\{1,t\}\cap B=\{t\}$,
we have
\begin{equation*}
\ \ \ \varrho(A\triangle B)=\tfrac{\ell^{\triangle}+1}{2}\; ,
\end{equation*}
by \cite[Prop.~2.4(i)]{M-SC-II}, and
\begin{equation*}
\varrho(A\cap B)=\tfrac{\ell^{\cap}+1}{2}\; ,\ \ \ \varrho(A\cup B)=\tfrac{\ell+1}{2}\; ,
\end{equation*}
by \cite[Prop.~2.4(iv)(ii)]{M-SC-II}. Note that
\begin{equation*}
\varrho\bigl(E_t-(A\cup B)\bigr)=\tfrac{\ell-1}{2}\; .
\end{equation*}

\noindent(viii) Since $|\{1,t\}\cap A|=|\{1,t\}\cap B|=0$,
we have
\begin{equation*}
\ \ \ \varrho(A\triangle B)=\tfrac{\ell^{\triangle}-1}{2}\; ,
\end{equation*}
and
\begin{equation*}
\varrho(A\cap B)=\tfrac{\ell^{\cap}-1}{2}\; ,\ \ \ \varrho(A\cup B)=\tfrac{\ell-1}{2}\; ,
\end{equation*}
by \cite[Prop.~2.4(iii)]{M-SC-II}. Note that $\varrho(E_t-(A\cup B))=\varrho(A\cup B)+1$, that is,
\begin{equation*}
\varrho\bigl(E_t-(A\cup B)\bigr)=\tfrac{\ell+1}{2}\; .
\end{equation*}

\noindent(ix) Since $|\{1,t\}\cap A|=0$, and $\{1,t\}\cap B=\{t\}$,
we have
\begin{equation*}
\ \ \ \varrho(A\triangle B)=\tfrac{\ell^{\triangle}+1}{2}\; ,
\end{equation*}
by \cite[Prop.~2.4(iv)]{M-SC-II}, and
\begin{equation*}
\varrho(A\cap B)=\tfrac{\ell^{\cap}-1}{2}\; ,\ \ \ \varrho(A\cup B)=\tfrac{\ell+1}{2}\; ,
\end{equation*}
by \cite[Prop.~2.4(iii)(iv)]{M-SC-II}. Note that $\varrho(E_t-(A\cup B))=\varrho(A\cup B)$, that is,
\begin{equation*}
\varrho\bigl(E_t-(A\cup B)\bigr)=\tfrac{\ell+1}{2}\; .
\end{equation*}

\noindent(x) Since $\{1,t\}\cap A=\{1,t\}\cap B=\{t\}$,
we have
\begin{equation*}
\ \ \ \varrho(A\triangle B)=\tfrac{\ell^{\triangle}-1}{2}\; ,
\end{equation*}
by \cite[Prop.~2.4(iii)]{M-SC-II}, and
\begin{equation*}
\varrho(A\cap B)=\tfrac{\ell^{\cap}+1}{2}\; ,\ \ \ \varrho(A\cup B)=\tfrac{\ell+1}{2}\; ,
\end{equation*}
by \cite[Prop.~2.4(iv)]{M-SC-II}. Note that
\begin{equation*}
\varrho\bigl(E_t-(A\cup B)\bigr)=\tfrac{\ell+1}{2}\; .
\end{equation*}
\end{proof}

\vspace{5mm}
\end{document}